\documentclass[onefignum,onetabnum]{article}

\usepackage[margin=1in]{geometry}
\usepackage{lipsum}
\usepackage{amsfonts,amsthm,amsmath,amssymb}
\usepackage{graphicx}
\usepackage{epstopdf}
\usepackage{combelow}
\usepackage{algorithmic}
\usepackage{cleveref}
\ifpdf
  \DeclareGraphicsExtensions{.eps,.pdf,.png,.jpg}
\else
  \DeclareGraphicsExtensions{.eps}
\fi

\newtheorem{definition}{Definition}
\newtheorem{proposition}{Proposition}

\title{Reduced operator inference for nonlinear partial differential equations}

\author{Elizabeth Qian\footnote{Department of Computing + Mathematical Sciences, California Institute of Technology, Pasadena, CA
  (eqian@caltech.edu, \url{http://www.elizabethqian.com}).}
  \and Ionu\cb{t}-Gabriel Farca\cb{s}\footnote{Oden Institute for Computational Engineering and Sciences, University of Texas at Austin, Austin, TX
  (ionut.farcas@austin.utexas.edu, kwillcox@oden.utexas.edu, \url{https://kiwi.oden.utexas.edu}).}
\and Karen Willcox\footnotemark[2]
}

\usepackage{amsopn}

\newcommand{\R}{\mathbb{R}}
\newcommand{\pdv}[3][]{\frac{\partial^{#1}#2}{\partial#3^{#1}}}
\newcommand{\dv}[3][]{\frac{\text{d}^{#1}#2}{\text{d}#3^{#1}}}

\newcommand{\final}{\text{final}}
\newcommand{\init}{\text{init}}

\newcommand{\cspod}{s_{\rm pod}}    
\newcommand{\csopi}{s_{\rm opinf}}    
\newcommand{\csp}{s_{\rm proj}}     
\newcommand{\dcsp}{\dot s_{\rm proj}} 

\newcommand{\bT}{\mathbf{T}}

\newcommand{\bw}{\mathbf{w}}

\newcommand{\cC}{\mathcal{C}}
\newcommand{\cT}{\mathcal{T}}
\newcommand{\cX}{\mathcal{X}} 
\newcommand{\cY}{\mathcal{Y}} 
\newcommand{\cJ}{\mathcal{J}} 

\usepackage{graphicx}
\usepackage{wrapfig}
\usepackage{lscape}
\usepackage{rotating}
\usepackage{epstopdf}
\usepackage{xcolor}

\definecolor{red}{HTML}{000000}

\usepackage[numbers]{natbib}
\usepackage{bibentry}

\newcommand{\ignore}[1]{}
\newcommand{\nobibentry}[1]{{\let\nocite\ignore\bibentry{#1}}}

\begin{document}

\maketitle

\begin{abstract}
	We present a new scientific machine learning method that learns from data a computationally inexpensive surrogate model for predicting the evolution of a system governed by a time-dependent nonlinear partial differential equation (PDE), an enabling technology for many computational algorithms used in engineering settings.
	Our formulation generalizes to the function space PDE setting the Operator Inference method previously developed in [B. Peherstorfer and K. Willcox, \textit{Data-driven operator inference for non-intrusive projection-based model reduction}, Computer Methods in Applied Mechanics and Engineering, 306 (2016)] for systems governed by ordinary differential equations.
	The method brings together two main elements. First, ideas from projection-based model reduction are used to explicitly parametrize the learned model by low-dimensional polynomial operators which reflect the known form of the governing PDE. Second, supervised machine learning tools are used to infer from data the reduced operators of this physics-informed parametrization. For systems whose governing PDEs contain more general (non-polynomial) nonlinearities, the learned model performance can be improved through the use of \emph{lifting} variable transformations, which expose polynomial structure in the PDE. 
	The proposed method is demonstrated on two examples: a heat equation model problem that demonstrates the benefits of the function space formulation in terms of consistency with the underlying continuous truth, and a
  three-dimensional combustion simulation with over 18 million degrees of freedom, for which the learned reduced models achieve accurate predictions with a dimension reduction of five orders of magnitude and model runtime reduction of {\color{red} up to nine} orders of magnitude.
\end{abstract}

\section{Introduction}
Systems governed by nonlinear PDEs are ubiquitous in {\color{red} engineering and scientific} application and traditional numerical solvers based on high-dimensional spatial discretizations are {\color{red}computationally} expensive {\color{red}even on powerful supercomputers}. The development of efficient {\color{red}reduced} models for PDEs is therefore an enabling technology for many-query applications such as optimization or uncertainty quantification. 
Empirical successes in using machine learning to learn complex nonlinear functions from data have motivated many works which use machine learning tools to learn models for scientific problems governed by PDEs. The model learning task in such settings is to learn a map that describes the state evolution within the infinite-dimensional function space in which the spatially continuous PDE state lies. 
In this work, we consider specifically the task of learning a map that describes the state evolution within a low-dimensional subspace of the underlying infinite dimensional function space, in an approach that combines elements from supervised learning and model reduction.
Our method, \emph{Operator Inference for PDEs}, generalizes to the Hilbert space setting {\color{red}the} method~\cite{Peherstorfer16DataDriven} previously developed in Euclidean space which learns a {\color{red}reduced} model that reflects the structure of the system governing equations by drawing on ideas from projection-based model reduction.

In projection-based model reduction, a low-dimensional reduced model for the evolution of a nonlinear PDE is derived by projecting the PDE operators onto a low-dimensional subspace of the underlying infinite-dimensional Hilbert space. 
Several approaches to defining this approximation subspace have been proposed (see~\cite{willcox2015MORreview} for a review), but a common empirical choice is the proper orthogonal decomposition (POD) subspace, which spans the leading principal components of an available set of state data~\cite{berkooz1993proper,lumley1967structure,sirovich87turbulence}.
For PDEs with only linear and polynomial terms, the projection-based reduced model admits an efficient low-dimensional numerical representation that can be cheaply evaluated at a cost independent of the dimension of the original model~\cite{benner2015two,willcox2015MORreview,cuong2005certified,goyal2016algebraic,hesthaven2016certified}. For PDEs with more general nonlinear terms, an additional level of approximation, e.g., interpolation~\cite{astrid2008missing,barrault2004empirical,carlberg2013gnat,chaturantabut2010,drmac2016new,grepl2007efficient,nguyen2008best}, is generally needed to achieve computational efficiency. Alternatively, the works in~\cite{benner2015two,goyal2016algebraic,kramer2018lifting,KW2019_balanced_truncation_lifted_QB} transform PDEs with general nonlinear terms to representations with only quadratic nonlinearities via the use of structure-exposing \emph{lifting} transformations, and then project the quadratic operators of the resultant lifted PDE to obtain a reduced model.

Traditionally, projection-based model reduction is \emph{intrusive}, because numerical implementation of the reduced models requires access to codes that implement the PDE operators. This is a limitation in settings where the code is inaccessible (e.g., legacy or commercial codes). 
Methods for \emph{non-intrusive} reduced modeling use data to learn reduced operators: one approach is to learn a map from an input (such as a parameter or initial condition) to the coefficients of the PDE state in a reduced basis. This mapping can be parametrized by radial basis functions~\cite{Nair2013}, a neural network~\cite{mainini2017data,swischuk2018physics,wang2019non} or a nearest-neighbors regression~\cite{swischuk2018physics}. However, these works do not incorporate knowledge of the system governing equations into the learning parametrizations. 
The Operator Inference method proposed in~\cite{Peherstorfer16DataDriven} draws on projection-based model reduction to use knowledge of a system's governing ODEs to explicitly learn the projection-based reduced operators for ODE systems containing low-order polynomial nonlinear terms. 
The incorporation of knowledge of the governing equations allows Operator Inference to provably recover the true intrusive reduced polynomial operators of the governing ODEs under certain conditions~\cite{Peherstorfer16DataDriven,Peherstorfer19Reprojection}. 
The work in~\cite{QKPW19LiftLearn} presents the \emph{Lift \& Learn} method, which applies Operator Inference method to more general nonlinear ODEs by applying quadratic lifting transformations to ODE state data and then fitting reduced linear and quadratic operators to the lifted data. In a different approach, the work in~\cite{loiseau2018constrained} learns sparse reduced models using sparse regression.

While the aforementioned methods for model reduction are frequently applied to scientific models for systems governed by PDEs, the methods are most often developed for the ODE setting corresponding to models arising from spatial discretization of the PDEs. In this work, we {\color{red}formulate the Operator Inference learning approach} in the spatially continuous function space setting for the first time. The function space formulation has the following advantages over the Euclidean formulation: (i) The function space formulation is mesh-independent and consistent with the underlying infinite-dimensional system: e.g., it yields implementations that use inner products that approximate the underlying infinite-dimensional inner product, whereas the ODE formulation using the Euclidean inner product may yield approximations that are inconsistent with the underlying infinite-dimensional system. (ii) The function space formulation allows the {\color{red}Lift \& Learn approach to take advantage of more general (in particular, nonlocal) structure-exposing lifting transformations between function spaces, compared to previous work in the ODE setting~\cite{QKPW19LiftLearn} that considered only pointwise transformations.} (iii) The function space setting enables rigorous analysis of error with respect to the underlying function space truth: for example, the reduced-basis community has an extensive literature on function space error estimates for intrusive reduced models~\cite{grepl2005posteriori,haasdonk2011efficient,rozza2008reduced,veroy2002posteriori,veroy2005certified}. 

We emphasize that Operator Inference is not intended to replace intrusive model reduction approaches when intrusive approaches are feasible: rather, Operator Inference provides a path to obtaining a reduced model when intrusive approaches such as POD-Galerkin are not feasible. The new function space formulation of Operator Inference that we present here makes clear that non-intrusive reduced modeling approaches can and should be viewed in the context of recent works in supervised learning between function spaces.
For example, the work in~\cite{nelsen2021random} formulates a random feature model for approximating input-output maps on function spaces. Another approach~\cite{lu2021learning} simultaneously trains two neural networks, one which learns basis functions in which to approximate a solution function, and another which learns the coefficients of the learned basis functions. However, these architectures treat the data as arising from a black box, and do not incorporate structure in system governing equations into their learning architectures. The works in~\cite{anandkumar2020neural,li2020fourier} use knowledge of kernel operator structure to inspire a novel neural network architecture for approximating maps between function spaces for PDEs. {\color{red}In this work}, we use knowledge of the system governing equations to explicitly parametrize the PDE evolution map by low-dimensional operators which reflect the structure of the PDE, inspired by work in projection-based model reduction.

Our emphasis on learning polynomial operators for possibly transformed system states bears similarities to approaches that seek to learn the linear Koopman operator for observables of nonlinear dynamical systems. Koopman operator theory states that every nonlinear dynamical system can be exactly described by an infinite-dimensional linear operator that acts on scalar observables of the system~\cite{koopman1932dynamical,mezic2005spectral,mezic2013analysis,kutz2016dynamic}. DMD learns a finite-dimensional approximation of the Koopman operator in a given set of observables using a least-squares minimization~\cite{williams2015data,schmid2010dynamic}. The success of this approach is dependent on whether the chosen set of observables defines a space that is invariant or close to invariant with respect to the Koopman operator. Previous works have used dictionary-learning~\cite{li2017extended}, neural networks~\cite{takeishi2017learning}, and kernel methods~\cite{kevrekidis2016kernel} to define the space of observables. {\color{red} Another recent approach uses neural networks to define coordinates in which terms of a governing equation are identified from a large dictionary of possible terms using sparse regression~\cite{champion2019data}.} In contrast, we propose a physics-informed approach which chooses learning variables based on the known governing PDE.

The remainder of the paper is organized as follows. \Cref{sec: setting} formulates the prediction problem for a system governed by a nonlinear PDE and provides background on POD-Galerkin model reduction. \Cref{sec: operator inference PDE} presents our Operator Inference formulation for learning mappings between Hilbert spaces which approximate the dynamics of a nonlinear PDE. \Cref{sec: lifting} introduces Lift \& Learn in the PDE setting, which uses variable transformations to apply Operator Inference to learn quadratic models for non-quadratic PDEs. \Cref{sec: numerical experiments} presents numerical results for two examples: (i) a heat equation example which demonstrates that the ODE formulation can lead to results that are inconsistent with the underlying truth, and (ii) a three-dimensional rocket combustion simulation with over 18 million degrees of freedom that takes 45,000 CPU hours to run, which demonstrates the {\color{red} potential of the model learning approach to yield computational gains for high-dimensional engineering simulations}. Conclusions are discussed in \Cref{sec: conclusions}.

\section{Setting and technical foundations}\label{sec: setting}
This section introduces the initial\slash boundary-value problem for which we seek {\color{red}a reduced} model (\Cref{ssec: problem formulation}) and presents the POD-Galerkin approach to model reduction (\Cref{ssec: POD background}) on which our method builds.

\subsection{Problem formulation}\label{ssec: problem formulation}
Let $\Omega$ be a bounded physical domain in $\R^d$ with Lipschitz continuous boundary $\partial\Omega$, and let $[0,T_\final]\subset\R$ be a time {\color{red}interval} of interest. Let $s:\Omega\times[0,T_\final]\to\R^{d}$
denote the $d$-dimensional physical state. We denote by $s_j:\Omega\times[0,T_\final]\to\R$ the $j$-th element of the state $s$, so that
\begin{align}
  s(x,t) = \begin{pmatrix}
    s_1(x,t) \\ s_2(x,t) \\ \vdots \\ s_d(x,t)
  \end{pmatrix}.
\end{align} 
For $j = 1,2,\ldots,d$, let $\cX_j$ be a separable Hilbert space of real-valued functions of $\Omega$. 
We denote by $\cX= \cX_1 \times \cX_2 \times \cdots \times \cX_{d}$ the product of the spaces $\cX_j$
and endow $\cX$ with the natural definition of inner product:
\begin{align}
  \big<s,s'\big>_\cX = \sum_{j=1}^{d}\left<s_j,s'_j\right>_{\cX_j}.
\end{align}
Let $f: \cX\to \cX$ denote a smooth nonlinear {\color{red}mapping between function spaces}.
We then consider the (strong form of the) following partial differential equation, written as an ordinary differential equation in the Hilbert space $\cX$:
\begin{align}
  \label{eq: nonlinear IBVP}
  \pdv{s}t &= f(s), \qquad s({\color{red}\cdot,} \, 0) = s_\init.
\end{align}
We remark that in our numerical examples in \Cref{sec: numerical experiments}, the $\cX$-inner product is the $L^2(\Omega)$-inner product; however, our formulation is not restricted to this choice. In what follows, we therefore use $\left<\cdot,\cdot,\right>_\cX$ to denote an abstract choice of inner product for the Hilbert space $\cX$.

\subsection{POD-Galerkin model reduction}\label{ssec: POD background}
The POD-Galerkin approach to model reduction defines a reduced model for an initial\slash boundary value problem by projecting the operators of the governing PDE onto a low-dimensional subspace spanned by the leading modes of a proper orthogonal decomposition of an available set of state data.
Let $s^{(1)},\,s^{(2)},\,\ldots,\,s^{(K)}\in \cX$ denote a set of $K$ snapshots of the solution to \cref{eq: nonlinear IBVP}, 
taken at different times (and more generally, possibly from multiple trajectories corresponding to e.g., varying initial conditions). The POD subspace of dimension $r$, denoted $\cX_r$, is the rank-$r$ subspace of $\cX$ that minimizes the mean square error between the snapshots and their projections onto $\cX_r$. Let $\cC: \cX\to \cX$ denote the empirical covariance operator of the snapshots, defined as
\begin{align}
  \cC \psi = \sum_{k=1}^K \big<\psi, s^{(k)}\big>_\cX \; s^{(k)}, \quad \psi\in \cX.
\end{align}
Because $\cC$ is compact, non-negative, and self-adjoint on $\cX$, there exists a sequence of eigenvalues $\lambda_i$ and an associated complete orthonormal basis $\psi_i \in \cX$ satisfying $\cC\psi_i = \lambda_i\psi_i$, with $\lambda_i\to0 \text{ as }i\to\infty$.
The leading $r$ eigenfunctions $\{\psi_i\}_{i=1}^r$ are a POD basis of rank $r$. That is, $\{\psi_i\}_{i=1}^r$ satisfy $\left<\psi_i,\psi_j\right>_\cX=\delta_{ij}$, and are a solution to
\begin{align}
  \min_{\{\psi_i\}_{i=1}^r\in \cX} \sum_{k=1}^K\left \| s^{(k)} - \sum_{l=1}^r \big<s^{(k)}, \psi_l\big>_\cX\psi_l\right\|_\cX^2,
\end{align}
with attained minimum sum-of-squares objective value $\sum_{i = r+1}^{K} \lambda_i$.

The POD-Galerkin approximation to \cref{eq: nonlinear IBVP} approximates the state, $s(x,t)$, in the span of the POD basis as follows:
\begin{align}
  \cspod(x,t) = \sum_{i=1}^r \hat s_i(t) \psi_i(x),
   \label{eq: POD approx continuous}
\end{align}
and evolves the POD state $\cspod(x,t)$ within the POD subspace $\cX_r$ by enforcing Galerkin orthogonality of the PDE residual to each function in the POD basis:
\begin{align}
  \left<\pdv{\cspod}t,\psi_l\right>_\cX = \big<f(\cspod),\psi_l\big>_\cX, \qquad l = 1,2,\ldots,r.
  \label{eq: Galerkin condition continuous}
\end{align}
Substituting \cref{eq: POD approx continuous} into \cref{eq: Galerkin condition continuous} yields the following system of $r$ ordinary differential equations (ODEs) for the evolution of $\hat s_1(t),\hat s_2(t),\ldots,\hat s_r(t)$, the coefficients of the POD basis functions:
\begin{subequations}
  \label{eq: POD galerkin IBVP nonlin continuous}
  \begin{align}
    \dv{\hat s_l}t &= \left<f\left(\sum_{i=1}^r \hat s_i(t) \psi_i(x)\right),\psi_l(x)\right>_\cX,\label{eq: POD dynamics nonlin continuous}\\
    \hat s_l(0) &= \big<s_\init(x),\psi_l(x)\big>_\cX,
  \end{align}
\end{subequations}
for $l =1,2,\ldots,r$. The initial-value problem in~\cref{eq: POD galerkin IBVP nonlin continuous} is referred to as the {\color{red}\textit{reduced model}}. However, evaluating the $r$-dimensional projected dynamics in~\cref{eq: POD dynamics nonlin continuous} requires evaluation of the map $f$ in the infinite-dimensional Hilbert space $\cX$ before projecting back down to $\cX_r$. 

In the special case where $f$ contains only polynomial nonlinearities, the ODEs which govern the evolution of the POD coefficients \cref{eq: POD dynamics nonlin continuous} have structure that allows rapid online solution of \cref{eq: POD galerkin IBVP nonlin continuous}.
For example, suppose $f(s) = a(s) + h(s,s)$,
where $a: \cX\to \cX$ is linear and $h: \cX\times \cX\to \cX$ bilinear. We note that this definition allows $h(s,s)$ to represent quadratic terms such as $s^2$ or $s\pdv sx$.  Then, \cref{eq: POD dynamics nonlin continuous} takes the form
\begin{align}
    \dv{\hat s_l}t & = \sum_{i=1}^r \hat s_i(t) \big<a \big(\psi_i(x)\big),\psi_l(x)\big>_\cX +\sum_{i,j=1}^r \hat s_i(t)\hat s_j(t) \big< h\big(\psi_i(x),\psi_j(x)\big),\psi_l(x)\big>_\cX.
  \label{eq: POD polynomial continuous}
\end{align}
If we define the reduced state $\hat s(t)\in\R^r$ by $\hat s(t) = \begin{pmatrix}
    \hat s_1(t) & \hat s_2(t) & \cdots & \hat s_r(t)
  \end{pmatrix}^\top$,
then \cref{eq: POD polynomial continuous} can be equivalently expressed as
\begin{align}
  \dv{\hat s}t = \hat A \hat s + \hat H (\hat s \otimes\hat s),
\end{align}
where $\otimes$ denotes the Kronecker product, and $\hat A\in\R^{r\times r}$ and $\hat H\in\R^{r\times r^2}$ are reduced linear and quadratic operators. Denote by $\hat a_{l,i}$ the $i$-th entry of the $l$-th row of $\hat A$ and denote by $\hat h_{l,ij}$ the $((i-1)r+j)$-th entry of the $l$-th row of $\hat H$. Then, the entries of $\hat A$ and $\hat H$ are given by $\hat a_{l,i} = \left<\psi_l, a(\psi_i)\right>_\cX$, and $ \hat h_{l,ij} = \left<\psi_l, h(\psi_i,\psi_j)\right>_\cX$, for $l,i,j=1,2,\ldots,r$.
Computing the reduced operators $\hat A$ and $\hat H$ is considered \emph{intrusive} because it requires access to the PDE operators $a$ and $h$. We propose a \emph{non-intrusive} method for learning the reduced operators from data in \Cref{sec: operator inference PDE}. 

\section{Operator Inference for PDEs}\label{sec: operator inference PDE}
This section presents our formulation of the Operator Inference method for learning reduced models for nonlinear PDEs. \Cref{ssec: least squares formulation} introduces the learning formulation as a linear least-squares problem. \Cref{ssec: numerical considerations} discusses numerical properties of the method. 

\subsection{{\color{red}Formulation}}\label{ssec: least squares formulation}
Operator Inference \emph{non-intrusively} learns a reduced model for \cref{eq: nonlinear IBVP} from state snapshot and time derivative data. That is, suppose that for each state snapshot $s^{(k)}$, for $k = 1,2,\ldots,K$, the corresponding time derivative $\dot s^{(k)} = f(s^{(k)})$ is available.
We will approximate the PDE state in $\cX_r$, the POD subspace of rank $r$:
\begin{align}
  \csopi(x,t) = \sum_{i=1}^r \tilde s_i(t) \psi_i(x),
\end{align}
where the basis functions $\psi_i$ are defined as in \Cref{ssec: POD background}.
Then, informed by the form of the POD-Galerkin reduced model, we fit to the available data a model for the evolution of $\csopi$ within $\cX_r$ with the following form:
\begin{align}
  \pdv{\csopi}t = f_r(\csopi(x,t);\tilde a_{l,i},\tilde h_{l,ij}) 
\end{align}
where $f_r(\cdot\,; \tilde a_{l,i}, \tilde h_{l,ij}) : \cX_r\to \cX_r$ has the explicit polynomial form
\begin{align}
  f_r(\csopi(x,t); \tilde a_{l,i}, \tilde h_{l,ij}) = \sum_{l=1}^r \left(\sum_{i=1}^r\tilde a_{l,i} \tilde s_i(t) + \sum_{i,j=1}^r\tilde h_{l,ij} \tilde s_i(t) \tilde s_j(t)\right) \psi_l(x),
  \label{eq: explicit poly op inf form}
\end{align}
where $\tilde s_i(t) = \left<\psi_i(x),\csopi(x,t)\right>$.
To learn the parameters $\tilde a_{l,i}$ and $\tilde h_{l,ij}$, the snapshot and time derivative data are projected onto the POD subspace $\cX_r$ as follows:
\begin{align}
  \csp^{(k)} = \sum_{l=1}^r\big< s^{(k)},\psi_l\big>_\cX\psi_l,
  &&
  \dcsp^{(k)} = \sum_{l=1}^r \big<  \dot s^{(k)},\psi_l\big>_\cX \psi_l. 
\end{align}
The parameters $\tilde a_{l,i}$ and $\tilde h_{l,ij}$, for $l,i,j=1,2,\ldots,r$, are fit by minimizing the following least-squares objective:
\begin{align}
  \min_{\tilde a_{l,i}\in\R, \tilde h_{l,ij}\in\R} \frac1K\sum_{k=1}^K \left\| f_r\big(\csp^{(k)};\,\tilde  a_{l,i},\tilde h_{l,ij}\big) - \dcsp^{(k)}\right\|_\cX^2.
  \label{eq: continuous least squares}
\end{align}
Since $\cX_r$ is isomorphic to $\R^r$, we can collect the POD coefficients of the state and time derivative data as follows:
\begin{align}
  \tilde s^{(k)} = \begin{pmatrix}
    \left<s^{(k)},\psi_1\right>_\cX \\  \left<s^{(k)},\psi_2\right>_\cX  \\ \vdots \\  \left<s^{(k)},\psi_r\right>_\cX 
  \end{pmatrix}, 
  \quad
  \dot{\tilde s}^{(k)} = \begin{pmatrix}
    \left<\dot s^{(k)},\psi_1\right>_\cX  \\ \left<\dot s^{(k)},\psi_2\right>_\cX \\ \vdots \\ \left<\dot s^{(k)},\psi_r\right>_\cX
  \end{pmatrix}, 
  \label{eq: reduced data vector}
\end{align}
for $k=1,2,\ldots,K$. Then, \cref{eq: continuous least squares} can be equivalently expressed as
\begin{align}
  \arg\min_{\tilde A\in\R^{r\times r}, \tilde H\in\R^{r\times r^2}} \frac1K \sum_{k=1}^K \left\| \tilde A \tilde s^{(k)} + \tilde H \big(\tilde s^{(k)} \otimes \tilde s^{(k)} \big) - \dot{\tilde s}^{(k)}\right\|_{\R^r}^2,
  \label{eq: continuous least-squares iso}
\end{align}
where $\tilde a_{l,i}$ is the $i$-th entry of the $l$-th row of $\tilde A$ and $\tilde h_{l,ij}$ is the $((i-1)r+j)$-th entry of the $l$-th row of $\tilde H$. The inferred matrix operators $\tilde A$ and $\tilde H$ define a reduced model of the form
\begin{align}
  \pdv{\tilde s}t = \tilde A \tilde s + \tilde H (\tilde s \otimes \tilde s).
\end{align}

Due to the inherent symmetry in the quadratic terms $\tilde h_{l,ij}\tilde s_i(t)\tilde s_j(t)$ and $\tilde h_{l,ji}\tilde s_j(t) \tilde s_i(t)$ in \cref{eq: explicit poly op inf form} the solution to \cref{eq: continuous least squares} (and therefore to \cref{eq: continuous least-squares iso}) is not unique. We therefore specify that we seek the solution to \cref{eq: continuous least squares} (equivalently, to \cref{eq: continuous least-squares iso}) that minimizes the sum of the operator Frobenius norms:
\begin{align}
  \left\|\tilde A \right\|_F^2 + \left\|\tilde H \right\|_F^2 = \sum_{l,i,j=1}^r \left| \tilde h_{l,ij} \right|^2 + \sum_{l,i=1}^r \left|\tilde a_{l,i} \right|^2.
  \label{eq: operator norm}
\end{align}
The solution to \cref{eq: continuous least squares}\slash\cref{eq: continuous least-squares iso} that minimizes \cref{eq: operator norm} will yield a \emph{symmetric} quadratic operator $\tilde H$ in the sense that $\tilde h_{l,ij} = \tilde h_{l,ji}$.

\subsection{Numerical considerations}\label{ssec: numerical considerations}
The formulation of Operator Inference as a linear least-squares problem has several numerical advantages. First, note that the least-squares problem has the following matrix algebraic formulation:
\begin{align}
   D \begin{bmatrix}
     \tilde A^\top \\ \tilde H^\top
   \end{bmatrix} = \dot{\tilde S}^\top,
   \label{eq: matrix algebra}
\end{align} 
where the least-squares data matrix $D\in\R^{r\times (r+r^2)}$  and right-hand side $\dot{\tilde S}\in\R^{r\times K}$ are given by
\begin{align}
  D = \begin{bmatrix}
    (\tilde s^{(1)})^\top & \big(\tilde s^{(1)}\otimes \tilde s^{(1)}\big)^\top  \\
    (\tilde s^{(2)})^\top & \big(\tilde s^{(2)}\otimes \tilde s^{(2)}\big)^\top  \\
    \vdots & \vdots \\
    (\tilde s^{(K)})^\top & \big(\tilde s^{(K)}\otimes \tilde s^{(K)}\big)^\top  \\
  \end{bmatrix},
  \qquad
  \dot{\tilde S}^\top = \begin{bmatrix}
    (\dot{\tilde s}^{(1)})^\top \\ 
    (\dot{\tilde s}^{(2)})^\top\\
    \vdots \\
    (\dot{\tilde s}^{(K)})^\top
  \end{bmatrix}.
\end{align}
Note that \cref{eq: matrix algebra} in fact defines $r$ independent least-squares problems, one for each row of the inferred operators $\tilde A$ and $\tilde H$. The rows of the inferred operators are independently fit to the $r$ columns of $\dot{\tilde S}^\top$, containing data for the evolution of each of the $r$ components of the reduced state $\tilde s$.
\Cref{eq: matrix algebra} can be scalably solved for all $r$ right-hand sides by standard linear algebra routines, and additionally, standard QR methods inherently find the norm-minimizing least-squares solution which defines a symmetric quadratic operator $\tilde H$.

Second, since the norm-minimizing solution to \cref{eq: matrix algebra} satisfies $\tilde h_{l,ij} = \tilde h_{l,ji}$, each of the $r$ independent least-squares problems defined by \cref{eq: matrix algebra} has in fact only $r + {r\choose 2} = r + \frac{r(r+1)}2$ degrees of freedom. Thus, the number $K$ of state and time derivative pairs in the data set need only satisfy $K> r+ {r\choose 2}$ for \cref{eq: matrix algebra} to admit a unique solution with symmetric $\tilde H$ (assuming linear independence of the $K$ data pairs). Since $r$ is typically chosen to be small in the context of projection-based model reduction, this data requirement is small relative to the requirements of many black box machine learning methods which assume the availability of `big data'. Limiting the data requirements of the method is also our motivation for focusing on the inference of quadratic reduced operators: while the method can in principle be extended to infer matrix operators corresponding to higher-order polynomial terms, the number of degrees of freedom in the least-squares problem grows exponentially as the polynomial order is increased.

Finally, the sensitivity of the linear least-squares problem to perturbations in the data is well-understood and can be ameliorated through standard regularization methods, 
 for example, by adding weights to the Frobenius norm penalty described in~\cref{eq: operator norm}, as in
in the Frobenius norm of the reduced operators:
\begin{align}
  \begin{split}
    \arg\min_{\substack{\tilde A\in\R^{r\times r},\\ \tilde H\in\R^{r\times r^2}}} \bigg(\frac1K&\sum_{k=1}^K \left\| \tilde A \tilde s^{(k)}  + \tilde H \big(\tilde s^{(k)} \otimes \tilde s^{(k)} \big) - \dot{\tilde s}^{(k)}\right\|_{\R^r}^2 +\gamma_1 \left\|\tilde A\right\|_F^2 + \gamma_2 \left\|\tilde H\right\|_F^2\bigg).
  \end{split}
  \label{eq: LL regularized LS}
\end{align}
This is equivalent to adding a weighted Euclidean norm penalty for each of the $r$ independent least-squares problems whose solutions define rows of $\tilde A$ and $\tilde H$, and yields a symmetric operator $\tilde H$ as described earlier. However, by increasing the regularization weights, the coefficients in the operators can be driven closer to zero.
Explicit regularization also has the effect of making the Operator Inference problem well-posed in the case where even our modest data requirement cannot be met. Operator Inference is therefore especially amenable to the task of learning surrogate models for the high-dimensional simulations that typically arise in scientific settings, where data are often limited due to the expense of the high-dimensional simulations.

\section{Lift \& Learn for PDEs with non-quadratic nonlinearities}\label{sec: lifting}
The Operator Inference approach of \Cref{sec: operator inference PDE}, which learns linear and quadratic reduced operators, can be made applicable to PDEs with more general nonlinear terms through the use of lifting variable transformations which expose quadratic structure in the PDE. This section formulates in the PDE setting the \emph{Lift \& Learn} method introduced in~\cite{QKPW19LiftLearn} for ODEs.
\Cref{ssec: lifting motivation} motivates our consideration of quadratic structure-exposing transformations. Lifting is defined in \Cref{ssec: lifting}, and the Lift \& Learn approach is presented in \Cref{ssec: lift and learn summary}.

\subsection{Motivation}\label{ssec: lifting motivation}
While the Operator Inference method fits linear and quadratic reduced operators to data, the method itself makes no assumption of linear or quadratic structure in the governing PDE. Operator Inference can, in principle, be applied to fit a quadratic reduced model to systems governed by PDEs with arbitrary nonlinearities. However, when the Operator Inference parametrization reflects the form of the governing PDE, the Operator Inference model will have the same form as the POD-Galerkin reduced model. This fact is used in~\cite{QKPW19LiftLearn} to bound in the ODE setting the mean square error of the reduced model over the data by the error due to projection onto the POD subspace.
Additionally, the work in~\cite{Peherstorfer16DataDriven} uses this fact to show in the ODE setting that Operator Inference non-intrusively recovers the intrusive POD-Galerkin reduced model operators asymptotically, while the work in~\cite{Peherstorfer19Reprojection} proves pre-asymptotic recovery guarantees in the ODE setting under certain conditions. Similar analysis applies to the PDE setting considered here~\cite{QianPhDthesis}. These theoretical analyses motivate our consideration of lifting maps which expose quadratic structure in general nonlinear PDEs.

\subsection{Lifting maps}\label{ssec: lifting}
We seek to expose quadratic structure in a general nonlinear PDE through the use of \emph{lifting} maps which transform and/or augment the PDE state with auxiliary variables. 

\subsubsection{Definition of quadratic lifting}
Let $\cY=\cY_1\times \cY_2\times \cdots\times \cY_{d'}$ denote a separable Hilbert space with $d'\geq d$, and inner product $\left<\cdot,\cdot\right>_\cY$. Let $\cT: \cX \to \cY$ denote a continuous and differentiable nonlinear map. Then, let $w(\cdot,t) = \cT(s(\cdot,t))$, and let $\cJ$ denote the Jacobian of $\cT$.
\begin{definition}
  \label{def: lifting map}
  Consider the nonlinear PDE given by
  \begin{align}
    \pdv s t = f(s).
    \label{eq: repeated nonlin PDE}
  \end{align}
  The tuple $(\cT, a, h)$ is called a \emph{quadratic lifting} of \cref{eq: repeated nonlin PDE},
  and the $d'$-dimensional field $w(x,t)$ is the called the lifted state, if $w$ satisfies
  \begin{align}
    \pdv{w}t = \pdv{\cT(s)}t = \cJ(s)\pdv{s}t = \cJ(s)f(s) = a(\cT(s)) + h(\cT(s)) = a(w) + h(w).
    \label{eq: lifted dynamics definition}      
  \end{align}
  The map $\cT$ is called the \emph{lifting map}.
\end{definition}

\subsubsection{Lifted initial\slash boundary-value problem}
We can use \Cref{def: lifting map} to reformulate the original nonlinear initial/boundary value problem \cref{eq: nonlinear IBVP}.  We seek a lifted solution $w:[0,T_\final]\to \cY$ satisfying:
\begin{align}
  \pdv wt &= a(w) + h(w), \qquad w(0) = \cT(s_\init).
  \label{eq: lifted IBVP}
\end{align}

\begin{proposition}
  \label{prop: equivalence PDE}
  Suppose the tuple $(\cT, a,h)$ is a quadratic lifting as defined in \Cref{def: lifting map}. Then, if $s(x,t)$ solves \cref{eq: nonlinear IBVP}, $w(x,t) = \cT(s(x,t))$ is a solution of \cref{eq: lifted IBVP}.
\end{proposition}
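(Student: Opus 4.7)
The proof will proceed by directly verifying that $w(x,t) = \cT(s(x,t))$ satisfies each of the three components of the lifted initial/boundary-value problem in~\cref{eq: lifted IBVP}: the evolution equation~\cref{eq: lifted PDE}, the Dirichlet boundary condition~\cref{eq: lifted boundary condition}, and the initial condition~\cref{eq: lifted initial condition}. The structure is essentially a verification, since \Cref{def: lifting map} is designed so that the lifted state satisfies a quadratic PDE automatically whenever $s$ satisfies~\cref{eq: nonlinear PDE}; the main task is to assemble these observations cleanly.

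For the evolution equation, the plan is to differentiate $w = \cT(s)$ in time and apply the chain rule to obtain $\partial_t w = \cJ(s)\,\partial_t s$. Using the fact that $s$ satisfies~\cref{eq: nonlinear PDE}, i.e. $\partial_t s = f(s)$, this becomes $\partial_t w = \cJ(s) f(s)$. Then, by the defining relation~\cref{eq: lifted dynamics definition} of the quadratic lifting, $\cJ(s) f(s) = a(\cT(s)) + h(\cT(s)) = a(w) + h(w)$, which establishes~\cref{eq: lifted PDE} on $\Omega \times (0, T_\final]$.

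The boundary and initial conditions are then immediate. For the boundary condition, restricting $w = \cT(s)$ to $\partial\Omega \times [0, T_\final]$ and using~\cref{eq: nonlinear boundary condition} yields $w(\cdot, t) = \cT(s(\cdot, t)) = \cT(g)$, matching~\cref{eq: lifted boundary condition}. For the initial condition, evaluating at $t = 0$ and using~\cref{eq: nonlinear initial condition} gives $w(x, 0) = \cT(s(x,0)) = \cT(s_\init(x))$, matching~\cref{eq: lifted initial condition}.

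The only subtle point, which is more a matter of regularity bookkeeping than difficulty, is justifying the chain-rule step $\partial_t \cT(s) = \cJ(s)\,\partial_t s$ in the appropriate function-space sense on $\cX$ and $\cY$. This is handled by the hypothesis in \Cref{ssec: lifting} that $\cT$ is continuous and differentiable with Jacobian $\cJ$, which is precisely why that hypothesis is included; no further work beyond invoking it should be required. Thus the proposition follows essentially by construction from \Cref{def: lifting map}.
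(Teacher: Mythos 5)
Your proof is correct and follows essentially the same route as the paper's: apply the chain rule and the defining relation of the quadratic lifting to verify the lifted PDE, and observe that the boundary and initial conditions follow immediately by applying $\cT$ to the original conditions. Your added remark on the regularity needed for the chain-rule step is a reasonable clarification but does not change the argument.
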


\begin{proof}
  It is trivial to verify that if $s$ satisfies the original initial condition, then $\cT(s)$ satisfies the lifted initial condition. Then, note that because $s$ satisfies \cref{eq: nonlinear IBVP}, $\pdv{}t\cT(s) = \cJ(s)f(s).$
  By \Cref{def: lifting map}, $\cJ(s)f(s) = a(\cT(s)) + h(\cT(s))$, so $w$ satisfies \cref{eq: lifted IBVP}.
\end{proof}

\subsubsection{An example quadratic lifting}\label{sssec: lifting example}
  Consider the nonlinear reaction-diffusion equation with cubic reaction term:
  \begin{align}
    \label{eq: cubic reaction diffusion}
    \pdv{s}t = f(s) = \pdv[2]s x - s^3.
  \end{align}
  Let $\cT$ be defined as
  \begin{align}
    \cT: s \mapsto \begin{pmatrix}
      s \\ s^2
    \end{pmatrix} \equiv \begin{pmatrix}
      w_1 \\ w_2
    \end{pmatrix} = w.
    \label{eq: reaction diffusion cubic lifting}
  \end{align}
  Then, the evolution of $w$ satisfies $\pdv w t = a(w) + h(w,w)$,
  where $a$ and $h$ are defined as
  \begin{align}
    a(w) = \begin{pmatrix}
      \pdv[2]{w_1}x \\ 0
    \end{pmatrix},
    \qquad
    h(w,w') = \begin{pmatrix}
      -w_1 w'_2 \\ 2w_1 \pdv[2]{w'_1}x - 2 w_2 w'_2
    \end{pmatrix}.
    \label{eq: reaction diffusion forms}
  \end{align}

\begin{proposition}
  The tuple $(\cT,a,h)$ given by \cref{eq: reaction diffusion cubic lifting,eq: reaction diffusion forms} is a quadratic lifting of \cref{eq: cubic reaction diffusion}. 
\end{proposition}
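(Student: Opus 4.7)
The plan is to verify the condition of \Cref{def: lifting map} directly, namely that $\cJ(s)f(s) = a(\cT(s)) + h(\cT(s),\cT(s))$ componentwise, so that $w = \cT(s)$ solves the quadratic evolution equation in \cref{eq: lifted dynamics definition}. Everything else reduces to substitution.

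First I would compute the (formal) Jacobian of the lifting map \cref{eq: reaction diffusion cubic lifting}. Since $\cT$ is purely algebraic (no spatial derivatives appear in $\cT$ itself), the Jacobian is the column vector $\cJ(s) = (1,\,2s)^\top$. Applying this to the right-hand side $f(s) = \pdv[2]{s}{x} - s^3$ of \cref{eq: cubic reaction diffusion} gives
\begin{align}
  \cJ(s)f(s) = \begin{pmatrix} \pdv[2]{s}{x} - s^3 \\ 2s\,\pdv[2]{s}{x} - 2s^4 \end{pmatrix}.
\end{align}

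Next, I would substitute $w_1 = s$, $w_2 = s^2$ into the linear and bilinear operators from \cref{eq: reaction diffusion forms}. The linear term produces $a(\cT(s)) = \big(\pdv[2]{s}{x},\,0\big)^\top$. Evaluating $h$ on the diagonal with $w = w' = \cT(s)$ yields a first component $-w_1 w_2 = -s\cdot s^2 = -s^3$ and a second component $2w_1\pdv[2]{w_1}{x} - 2w_2 w_2 = 2s\,\pdv[2]{s}{x} - 2s^4$. Adding $a(\cT(s))$ and $h(\cT(s),\cT(s))$ therefore reproduces the expression for $\cJ(s)f(s)$ displayed above.

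Finally, I would invoke \Cref{def: lifting map}: since the Jacobian identity holds for every sufficiently smooth $s$ satisfying \cref{eq: cubic reaction diffusion}, the tuple $(\cT,a,h)$ defined in \cref{eq: reaction diffusion cubic lifting,eq: reaction diffusion forms} is a quadratic lifting. There is no real obstacle here; the only subtlety worth flagging is that $\cJ$ acts on the spatial-derivative term $\pdv[2]{s}{x}$ as well as on $s^3$, but since $\cT$ contains no spatial derivatives the pointwise multiplication by $\cJ(s)$ commutes with taking $\pdv[2]{}{x}$ in the argument, which is exactly what the definition of $h$ in the second component accounts for via the term $2w_1\pdv[2]{w_1'}{x}$.
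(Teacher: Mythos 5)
Your proof is correct and follows essentially the same route as the paper: compute $\cJ(s) = (1,\,2s)^\top$, apply it to $f(s)$, and match the result componentwise with $a(\cT(s)) + h(\cT(s),\cT(s))$ after substituting $w_1 = s$, $w_2 = s^2$. The only cosmetic difference is the direction of the verification (you expand the right-hand side and compare, while the paper rewrites $\cJ(s)f(s)$ in the lifted variables), which is immaterial.
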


\begin{proof}
  Note that the Jacobian of $\cT$ is given by $\cJ(s) = \begin{pmatrix}
      1 \\ 2s 
    \end{pmatrix}$.
  Thus,
  \begin{align}
  	\begin{split}
  		\pdv w t &= \cJ(s) f(s) = \begin{pmatrix}
	      1 \\ 2s 
	    \end{pmatrix} \left(\pdv[2]s x - s^3\right)
	    = \begin{pmatrix}
	      \pdv[2]s x - s^3 \\ 
	      2s\pdv[2]s x - 2s^4
	    \end{pmatrix} \\
	    &= \begin{pmatrix}
	      \pdv[2]{w_1}x - w_1w_2 \\
	      2w_1\pdv[2]{w_1}x - 2(w_2)^2
	    \end{pmatrix} 
	    =  a(w) + h(w,w),
  	\end{split}
  \end{align}
  for $a,h$ defined in \cref{eq: reaction diffusion forms}, so the tuple $(\cT,a,h)$ satisfies \Cref{def: lifting map}.
\end{proof}

\subsubsection{Discussion}
Lifting maps allow many systems with general nonlinearities to be reformulated with only polynomial nonlinearities, and systems with higher-order polynomial nonlinearities to be reformulated with only quadratic nonlinearities~\cite{gu2011qlmor}. While there are no universal guarantees of the existence of a lifting, liftings have been derived for many nonlinear PDEs. For example, the lifting of the cubic reaction-diffusion PDE in \Cref{sssec: lifting example} can be extended to lift reaction-diffusion PDEs with higher-order polynomial source terms to quadratic form with additional auxiliary variables~\cite{QianPhDthesis}. For fluids problems, both the compressible Euler equations and the Navier-Stokes equations admit a quadratic formulation based on the specific volume representation of the fluid state variables~\cite{balajewicz2016minimal,QKPW19LiftLearn}. Quadratic liftings have also been derived for the FitzHugh-Nagumo neuron activation model~\cite{kramer2018lifting} and the Chafee-Infante equation~\cite{benner2015two}. 
In the numerical experiments in \Cref{sec: numerical experiments}, we will consider a \emph{partial} lifting (in which most---but not all---terms of the lifted PDEs are quadratic in the lifted state) of the PDEs governing a three-dimensional rocket combustion simulation.
While we wish to emphasize that the non-intrusive nature of the Operator Inference method allows the flexibility to learn models for PDEs with arbitrary non-quadratic nonlinearities, the identification of even a partial quadratic lifting of the governing PDE can improve the ability of the inferred quadratic operators to model the system dynamics in the lifted variables.

\subsection{Lift \& Learn for PDEs}\label{ssec: lift and learn summary}
Once a lifting map has been identified for a nonlinear PDE, the Lift \& Learn approach takes state snapshot $\{s_k\}_{k=1}^K$ and time derivative data $\{\dot s_k\}_{k=1}^K$ in the original nonlinear variable representation and applies the lifting map to each snapshot and time derivative:
\begin{align}
  w_k= \cT(s_k), \qquad \dot w_k = \cJ(s_k) \dot s_k, \qquad k = 1,2,\ldots,K.
  \label{eq: lifting snapshots}
\end{align}
The lifted snapshots $\{w_k\}_{k=1}^K$ are used to define a POD basis of rank $r$ in the lifted variables. We then apply the Operator Inference approach of \Cref{sec: operator inference PDE} to the lifted data to obtain a lifted reduced model. 

We emphasize that it is the non-intrusive nature of the Operator Inference method that enables the practical development of reduced models in lifted variables. While our knowledge of the lifted governing equations enables us to write down expressions for the projection-based reduced lifted operators, computing these reduced operators in practice requires numerical evaluation of the $a(\cdot)$ and $h(\cdot,\cdot)$ forms, which are generally not available: even in settings where code for the original governing equations is accessible, code for the lifted operators is generally not available. The Operator Inference approach allows us to learn reduced lifted operators in such practical settings.

\section{Numerical examples}\label{sec: numerical experiments}
We now present numerical experiments from two examples: the first example, presented in \Cref{ssec: heat equation}, is a one-dimensional heat equation, which demonstrates {\color{red}that choosing an inner product for implementation that is inconsistent with the underlying truth leads to POD basis functions inconsistent with the underlying function space as well as learned operators that are sub-optimal with respect to the underlying function space norm.} 
The second example is a three-dimensional simulation of a model rocket combustor that demonstrates the {\color{red} potential of our learning approach to scale to complex high-dimensional engineering problems.}
\Cref{ssec: nonlin CVRC eqs} presents the nonlinear governing equations of the combustion model, and \Cref{ssec: lifted CVRC eqs} presents the transformation of the governing equations to approximately quadratic form. In \Cref{ssec: CVRC test problem}, we introduce the specific test problem we consider and the learning task. \Cref{ssec: CVRC op inf formulation} details the reduced model learning formulation. Finally, \Cref{ssec: CVRC prediction results} presents and discusses the prediction performance of our learned models.

\subsection{{\color{red}One-dimensional heat} equation}\label{ssec: heat equation}
We consider the heat equation,
\begin{align}
	\pdv ut = \pdv[2]ux,
\end{align}
on the unit spatial domain $x\in(0,1)$ with homogeneous Dirichlet boundary conditions and random Gaussian initial conditions as described below. 
The heat equation is discretized using a first-order explicit scheme in time and a second-order central difference scheme in space. The spatial discretization uses a non-uniform mesh: the grid spacing in $(0,0.3)$ is $\Delta x_1 = 0.05$, the spacing in $(0.3,0.7)$ is $\Delta x_2 = 0.01$, and the spacing in $(0.7,1)$ is $\Delta x_3=0.1$. This non-uniform spatial discretization was chosen to illustrate the effect of using different norms (Euclidean vs. $L^2(\Omega)$) in the Operator Inference formulation. Initial conditions $u_0(x)$ are drawn randomly as follows: let $L>1$ and let $\{\xi_l\}_{l=1}^L$ be independently and identically distributed according to a unit normal distribution. Then, a single random initial condition is given by:
\begin{align}\label{eq: heat rand init}
 	u_0(x) = \sqrt{2}\sum_{l=1}^L \xi_l(l\pi)^{-\frac32}\sin(l\pi x).
\end{align} 
Initial conditions drawn randomly according to the above correspond to random draws from the Gaussian random field $\mathcal{N}(0,C_L)$, where $C_L$ is the projection of the covariance kernel $C=\left(\pdv[2]{}x\right)^{-\frac32}$ onto its $L$ leading principal components. The true principal components of this covariance kernel are the sin functions $\sqrt{2}\sin(l\pi x)$, for $l = 1,2,\ldots,L$. Example initial conditions on our non-uniform mesh are shown in \Cref{fig: heat GRF}.

\begin{figure}[h]
	\centering
	\includegraphics[width=0.5\textwidth]{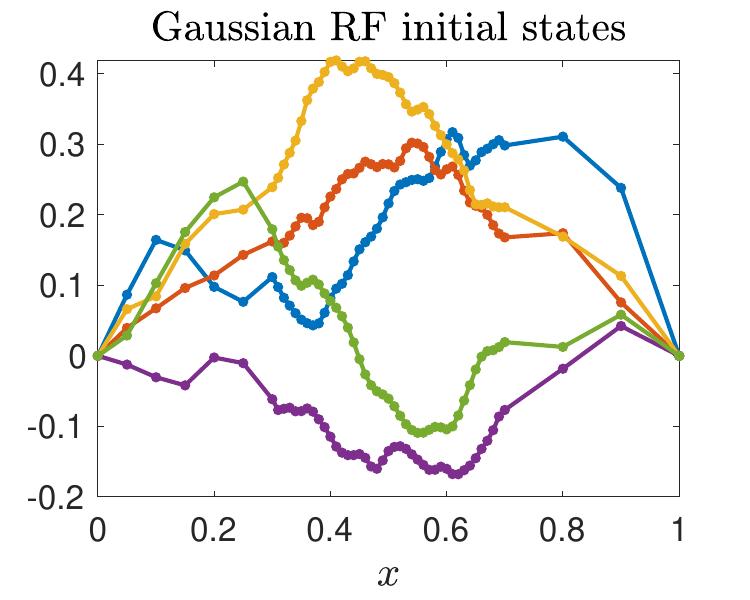}
	\caption{Example initial conditions drawn randomly according to~\cref{eq: heat rand init}. Line markers show non-uniform grid spacing.}\label{fig: heat GRF}
\end{figure}

We draw 1{\color{red},}000 initial conditions randomly as described with $L=60$ and evolve the discretized heat equation from $t = 0$ to $t = 0.01$ with time step $\Delta t = 10^{-4}$. The discretized state is saved at every tenth time step. Trajectories from the first 500 initial conditions are used as training data for Operator Inference and the latter 500 trajectories are reserved for testing. We compare results from the function space formulation presented in this work with results from the ODE formulation in earlier works. For the function space formulation, we use a discrete inner product that approximates the $L^2([0,1])$ inner product using the {\color{red}trapezoidal} rule, whereas the ODE formulation uses the Euclidean inner product.

\begin{figure}[h]
	\centering
	\includegraphics[width=0.32\textwidth]{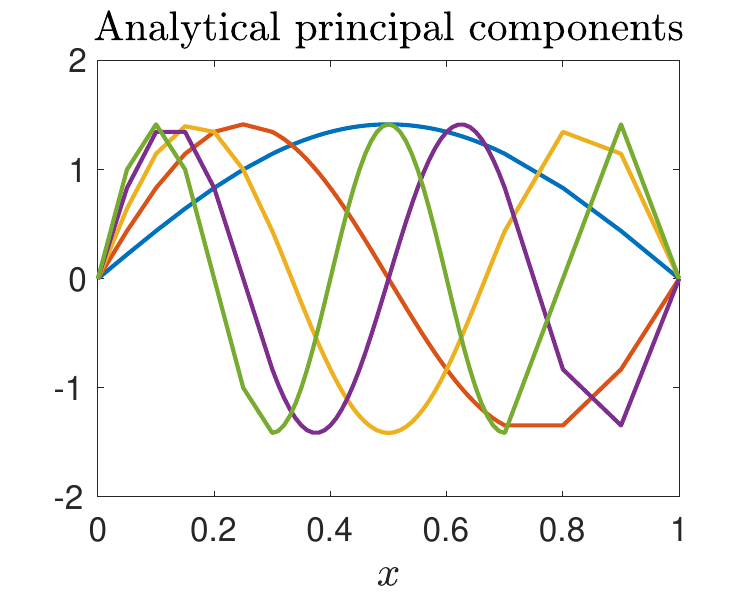}
	\includegraphics[width=0.32\textwidth]{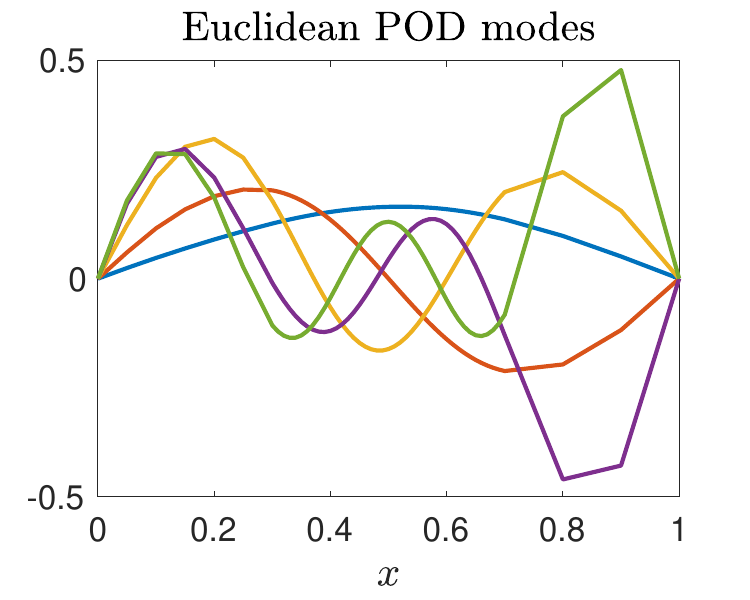}
	\includegraphics[width=0.32\textwidth]{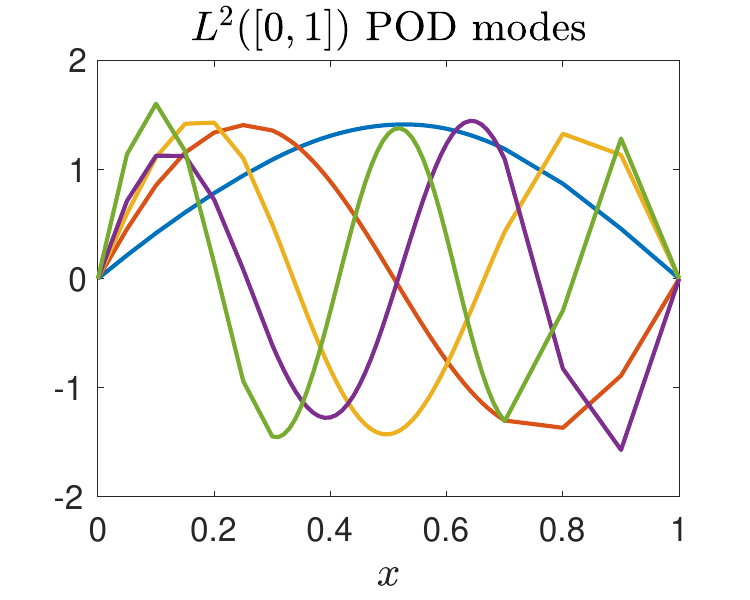}
	\caption{Leading POD basis functions of the heat equation data. Left: analytical principal components. Middle: empirical POD modes in the Euclidean inner product. Right: empirical POD modes in the $L^2([0,1])$ inner product.}\label{fig: heat POD modes}
\end{figure}

\Cref{fig: heat POD modes} compares the empirical POD modes of the data using the $L^2([0,1])$-inner product and the Euclidean inner product with the analytical principal components of the underlying covariance kernel. We note that the use of the $L^2([0,1])$-inner product in the function space formulation leads to empirical POD modes that are consistent with the underlying infinite-dimensional truth, whereas the Euclidean inner product used in the ODE setting leads to empirical POD modes that are inconsistent with the underlying distribution.

\begin{figure}[h]
	\centering
	\includegraphics[width=0.45\textwidth]{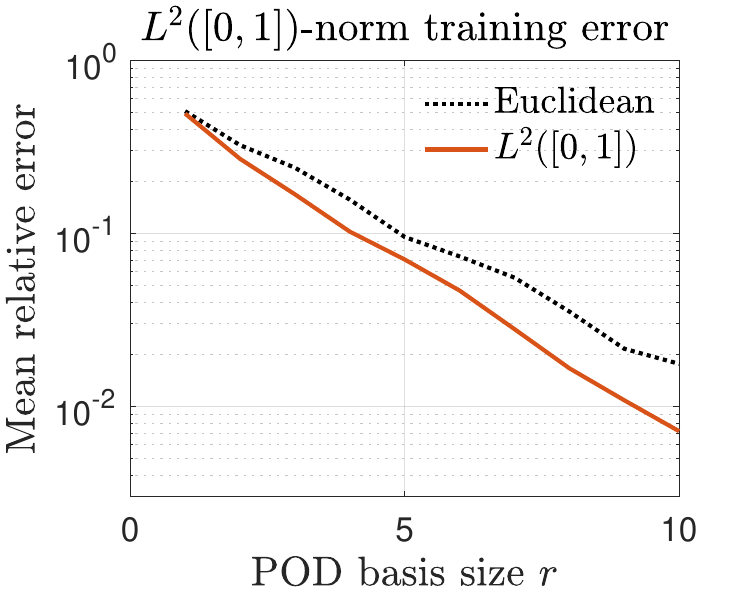}\hfill
	\includegraphics[width=0.45\textwidth]{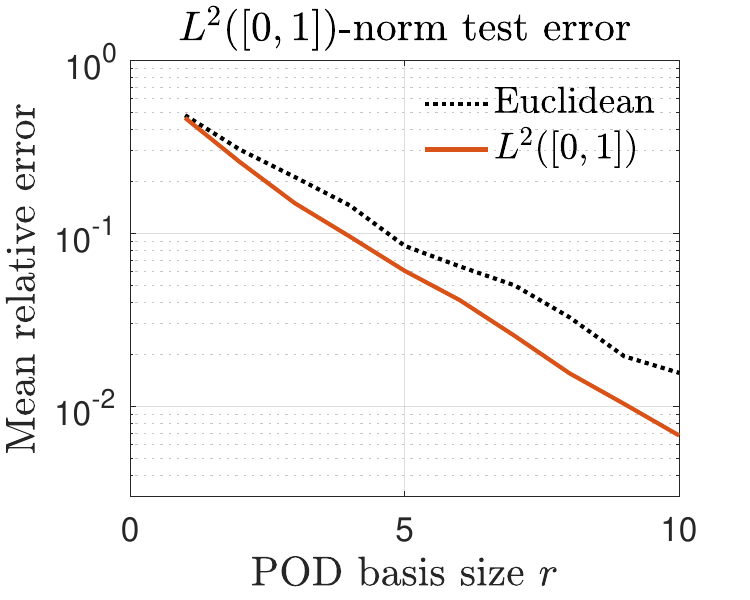}
	\caption{Training and test errors for Operator Inference learned models using function space formulation and Euclidean formulation.}\label{fig: heat errors}
\end{figure}

We use both the function space Operator Inference method and the Euclidean space Operator Inference method to learn reduced models of sizes $r = 1,2,\ldots, 10$ (the original size of the non-uniform mesh is $n=50$). These learned reduced models are then used to integrate from the reduced initial conditions in time. \Cref{fig: heat errors} compares the mean relative $L^2([0,1])$-norm errors of the predictions from each learned model. For this heat equation example, there is a clear improvement in prediction accuracy with respect to the underlying function space norm when the function space norm is used in Operator Inference.

Code for this example is available at {\tt https://github.com/elizqian/operator-\\inference}, which also contains examples of Operator Inference applied to other problems. Demonstration code for the Lift \& Learn approach can be found at {\tt https://github.com/elizqian/transform-and-learn}.

\subsection{Combustion problem: original governing PDEs}\label{ssec: nonlin CVRC eqs}
We now demonstrate the Operator Inference approach {\color{red}on a large-scale problem by} learning a reduced model for a three-dimensional simulation of the Continuously Variable Resonance Combustor (CVRC), an experiment at Purdue University that has been extensively studied experimentally and computationally. {\color{red}In particular, several recent works consider learning surrogate models for quasi-one dimensional~\cite{wang2019non,qian2019transform} and two-dimensional~\cite{swischuk2020learning,mcquarrie2021data} versions of its governing equations.} 
The governing PDEs for the CVRC test problem {\color{red} that we consider} are the three-dimensional compressible Navier-Stokes equations in conservative form with a flamelet/progress variable chemical model~\cite{ihme2008prediction,pierce2001progress}. In this chemical model, the chemistry is parametrized by two variables, the mixture mean, denoted $Z_m$, and the reaction progress variable, denoted $C$. Thus, there are $d = 7$ nonlinear state variables:
\begin{align}
  s = \begin{pmatrix}
    \rho & \rho u_1 & \rho u_2 & \rho u_3 &  \rho e & \rho Z_m & \rho C
  \end{pmatrix}^\top,
  \label{eq: CVRC conservative variables}
\end{align}
where $\rho$ is the density, $u_1$, $u_2$, and $u_3$ are the velocities in the three spatial directions, and $e$ is the specific energy. Together, the variables $Z_m$ and $C$ specify a \emph{flamelet manifold} for the chemical species concentrations and reaction source term based on looking up values from a pre-computed table. The governing PDE is given by
\begin{align}
  \pdv st + \nabla\cdot \big(F(s)- F_v(s)\big) = g(s),\label{eq: CVRC cons equations}
\end{align}
where $\nabla = \begin{pmatrix}
  \pdv{}{x_1} & \pdv{}{x_2} & \pdv{}{x_3}
\end{pmatrix}^\top$, $F(s)$ is the inviscid flux, given by
\begin{align}
  F(s) = \begin{pmatrix}
    \rho u_1 \\ \rho u_1^2 + p \\ \rho u_1u_2 \\ \rho u_1 u_3 \\ \rho u_1 h^0 \\ \rho u_1 Z_m \\ \rho u_1 C
  \end{pmatrix}\hat x_1
  +
  \begin{pmatrix}
    \rho u_2 \\ \rho u_1 u_2 \\ \rho u_2^2 + p \\ \rho u_2 u_3 \\ \rho u_2 h^0 \\ \rho u_2 Z_m \\ \rho u_2 C
  \end{pmatrix}\hat x_2
  +
  \begin{pmatrix}
    \rho u_3 \\ \rho u_1 u_3 \\ \rho u_2 u_3 \\ \rho u_3^2 + p \\ \rho u_3 h^0 \\ \rho u_3 Z_m \\ \rho u_3 C
  \end{pmatrix}\hat x_3,
  \label{eq: inviscid flux}
\end{align}
$F_v(s)$ is the viscous flux, given by
\begin{align}
  \begin{split}
    F_v(s) &= 
  \begin{pmatrix}
    0 \\ \tau_{11} \\ \tau_{21} \\ \tau_{31} \\ \beta_1 \\ \rho D \pdv{Z_m}{x_1} \\ \rho D \pdv C{x_1}
  \end{pmatrix}\hat x_1
  +
  \begin{pmatrix}
    0 \\ \tau_{12} \\ \tau_{22} \\ \tau_{32} \\ \beta_2 \\ \rho D \pdv{Z_m}{x_2} \\ \rho D \pdv C{x_2}
  \end{pmatrix}\hat x_2
  +
  \begin{pmatrix}
    0 \\ \tau_{13} \\ \tau_{23} \\ \tau_{33} \\ \beta_3 \\ \rho D \pdv{Z_m}{x_3} \\ \rho D \pdv C{x_3}
  \end{pmatrix}\hat x_3,
  \end{split}
  \label{eq: viscous flux}
\end{align}
where $\beta_i = \sum_{j=1}^3 u_j \tau_{ji} + \kappa \pdv{T}{x_i} -\rho\sum_{l=1}^{n_{\rm sp}}D\pdv{Y_l}{x_i} h_l$, for $i = 1,2,3$, 
and $g$ is the chemical  source term, given by $ g(s) = \begin{pmatrix}
    0 & 0 & 0 & 0 & 0 & 0 & \dot\omega_C 
  \end{pmatrix}^\top$.
The total enthalpy $h^0$ satisfies $\rho e = \rho h^0 -p$. The viscous shear stresses are given by $\tau_{ij} = \mu \left(\pdv{u_i}{x_i} + \pdv{u_j}{x_j} - \frac23 \pdv{u_m}{x_m}\delta_{ij}\right)$,
where the three directions are summed over in the term indexed by $m$. 

The temperature $T$ is computed under the ideal gas assumption, $T = \frac p{R\rho}$, where $R = \frac{\mathcal{R}}{W_\text{mol}}$ is the gas constant, $\mathcal{R}=8.314\frac{\text{kJ}}{\text{K}\cdot\text{kmol}}$ is the universal gas constant, and $W_\text{mol} = \left(\sum_{l=1}^{n_{\rm sp}} \frac{Y_l}{W_l}\right)$ is the molecular weight of the species mixture,
where $Y_l$ are species mass fractions of the $n_{\rm sp}$ individual chemical species and $W_l$ are their molecular weights.
The mass diffusivity $D$ is assumed to be equal to thermal diffusivity $\alpha$ under the unit Lewis number assumption,
$ D = \alpha = \frac{\kappa}{\rho c_p}, $
where $\kappa$ is the thermal conductivity, and $c_p = \pdv hT$ is the specific heat capacity, defined to be the derivative of the enthalpy $h$ with respect to the temperature $T$. The enthalpy $h = \sum_{l} h_l Y_l$ is a convex combination of the species enthalpies, weighted by the species mass fractions, where $h_l$ denotes the enthalpy of the $l$-th chemical species.
The enthalpy of the $l$-th species has the following dependence on temperature:
\begin{align}
  \label{eq: enthalpy species}
  \frac{h_l(T)}{\mathcal{R}/W_l} = \begin{cases}
    \begin{split}
      -\frac{a_{l,1,1}}T + a_{l,1,2}\ln T + a_{l,1,3}T + \frac{a_{l,1,4}}2 T^2 \\ + \frac{a_{l,1,5}}3 T^3 + \frac{a_{l,1,6}}4 T^4 + \frac{a_{l,1,7}}5 T^5 + a_{l,1,8}
    \end{split}
    ,\quad 200 {\rm K} \leq T \leq 1000 {\rm K}, \\ 
    \\
    \begin{split}
      -\frac{a_{l,2,1}}T + a_{l,2,2}\ln T + a_{l,2,3}T + \frac{a_{l,2,4}}2 T^2 \\+ \frac{a_{l,2,5}}3 T^3 + \frac{a_{l,2,6}}4 T^4 + \frac{a_{l,2,7}}5 T^5 + a_{l,2,8}
    \end{split}
    ,\quad 1000 {\rm K} \leq T \leq 6000 {\rm K},
  \end{cases}
\end{align}
where the coefficients $a_{l,i,j}$ are given for each species $l$.
The species mass fractions are a function of the flamelet variables, $Y_l=Y_l(Z_m,C)$, where the exact relationship is defined by interpolating between values in a pre-computed table. The source term for the progress variable equation is also defined by a pre-computed table based on the flamelet variables, $\dot\omega_C = \dot\omega_C(Z_m,C)$. 

{\color{red}Previous work on reduced modeling for a different, two-dimensional model of the CVRC has shown that reduced models in the conservative state variables can suffer from a lack of robustness and stability~\cite{huang2019investigations}. In order to apply the Lift \& Learn formulation of~\Cref{sec: lifting} to this problem, we seek a partially lifted state variable representation, which we describe in the next section.}

\subsection{Combustion problem: lifted governing PDEs}\label{ssec: lifted CVRC eqs}
Due to the many nonlinearities in the governing equations that are non-quadratic in the conservative state variables, we seek a variable transformation that exposes quadratic structure in these nonlinearities in order to learn quadratic reduced operators. For this complex problem, we use a \emph{partial} lifting, i.e., a variable transformation that transforms many---but not all---of the nonlinearities in governing PDEs into quadratic nonlinearities.
Inspired by the quadratic representation of the compressible Euler equations in the specific volume variables~\cite{QKPW19LiftLearn}, we transform the flow variables in the CVRC governing equations from their conservative representation to their specific volume representation, and retain the flamelet variables in their conservative form. Additionally, we add the fluid temperature $T$ to the lifted state because it is a key quantity of interest and we wish to model it directly without recourse to look-up tables. Thus, the lifted variable representation with $d'=8$ is given by
\begin{align}
  w = \begin{pmatrix}
    \zeta & u_1 & u_2 & u_3 & p & \rho Z_m & \rho C & T
  \end{pmatrix}^\top,
  \label{eq: CVRC lifted variables}
\end{align}
where $\zeta = \frac1\rho$.
When the specific heat capacity $c_p$ is a constant, the governing equations in the lifted variables \cref{eq: CVRC lifted variables} contain mostly quadratic nonlinear terms. These governing equations are derived in \Cref{appendix} and are given by:
\begin{subequations}
\label{eq: lifted CVRC equations}
\begin{align}
  \pdv\zeta t & = -\nabla \zeta \cdot u + \zeta(\nabla \cdot u) \label{eq: CVRC zeta}, \\
  \pdv {u_1} t & = - \zeta \pdv p{x_1} - u \cdot \nabla u_1  + \zeta \left(\pdv{\tau_{11}}{x_1} + \pdv{\tau_{12}}{x_2} + \pdv{\tau_{13}}{x_3}\right), \label{eq: CVRC ux}\\
  \pdv {u_2} t & =  - \zeta \pdv p{x_2} - u \cdot \nabla u_2  + \zeta \left(\pdv{\tau_{21}}{x_1} + \pdv{\tau_{22}}{x_2} + \pdv{\tau_{23}}{x_3}\right), \label{eq: CVRC uy}\\
  \pdv {u_3} t & =  - \zeta \pdv p{x_3} - u \cdot \nabla u_3 + \zeta \left(\pdv{\tau_{31}}{x_1} + \pdv{\tau_{32}}{x_2} + \pdv{\tau_{33}}{x_3}\right), \label{eq: CVRC uz}\\
  \pdv pt &= - \gamma p(\nabla\cdot u) - (u\cdot\nabla p) - u_1\left(\pdv{\tau_{11}}{x_1} + \pdv{\tau_{12}}{x_2} + \pdv{\tau_{13}}{x_3}\right)\cdots\label{eq: pressure evol CVRC}\\
    &\qquad - u_2\left(\pdv{\tau_{12}}{x_1} + \pdv{\tau_{22}}{x_2} + \pdv{\tau_{23}}{x_3}\right) - u_3\left(\pdv{\tau_{13}}{x_1} + \pdv{\tau_{23}}{x_2} + \pdv{\tau_{33}}{x_3}\right)\cdots\nonumber\\ 
    & \qquad+ \pdv{}{x_1}(u_1\tau_{11} + u_2\tau_{12} + u_3\tau_{13}) + \pdv{}{x_2}(u_1\tau_{21} + u_2\tau_{22} + u_3\tau_{23}) \cdots\nonumber\\
    &\qquad+ \pdv{}{x_3}(u_1\tau_{31} + u_2\tau_{32} + u_3\tau_{33}) + \pdv{q_1}{x_1} + \pdv{q_2}{x_2} + \pdv{q_3}{x_3},\nonumber
   \\
  \pdv{\rho Z_m}t &=  -\pdv{u_1 (\rho  Z_m)}{x_1} -\pdv{ u_2 (\rho Z_m)}{x_2} - \pdv{u_3(\rho  Z_m)}{x_3} +\frac{\kappa}{c_p}\nabla^2 Z_m \label{eq: CVRC rho Z} \\
  \pdv{\rho C}t & = -\pdv{u_1(\rho C)}{x_1} - \pdv{u_2 (\rho C)}{x_2}- \pdv{u_3(\rho  C)}{x_3} + \frac\kappa{c_p}\nabla^2 C + \dot\omega_C\label{eq: dCdt CVRC}\\
  \pdv Tt &= p\pdv \zeta t + \zeta\pdv pt.\label{eq: CVRC dTdt}
\end{align}
\end{subequations}
Note that \cref{eq: CVRC zeta,eq: CVRC ux,eq: CVRC uy,eq: CVRC uz} contain only quadratic dependencies in the lifted variables since the stresses $\tau_{ij}$ are linear in the velocity components. In \cref{eq: pressure evol CVRC}, only the last three terms contain non-quadratic linearities, due to the dependence of the heat fluxes on the unmodeled species mass fractions: $q_i = -\frac\kappa R \pdv {p\zeta}{x_i} + \frac\kappa{c_p} \sum_{l=1}^{n_\text{sp}}\pdv{Y_l}{x_i}$.
In \cref{eq: CVRC rho Z}, the first three terms are quadratic in the velocity components and the variable $\rho Z_m$, while we note for the last three terms that the product rule gives $\pdv {\rho Z_m}x = \rho \pdv{Z_m}x + Z_m\pdv\rho x$, which gives the following identity for the $x$-derivative of $Z_m$ (and similar identities for $y$ and $z$):
\begin{align}
  \pdv {Z_m}x = \zeta \pdv{\rho Z_m}x - Z_m \zeta \pdv \rho x = \zeta \pdv{\rho Z_m}x + \rho Z_m \pdv{\zeta}x.
\end{align}
Since the first spatial derivatives of $Z_m$ are quadratic in the variable $\rho Z_m$ and the specific volume $\zeta$, the second spatial derivatives must also be quadratic in these variables (due to linearity of the derivative operator), so \cref{eq: CVRC rho Z} is quadratic in the lifted variables. A similar argument shows that \cref{eq: dCdt CVRC} is quadratic in the velocity components, $\zeta$, and $\rho C$, except for the reaction source term $\dot\omega_C$, which is non-quadratic due its dependence on the flamelet look-up table.
Finally, \cref{eq: CVRC dTdt} is approximately cubic in the specific volume variables because $\pdv pt$ and $\pdv \zeta t$ are approximately quadratic in the specific volume variables. 
Because the dynamics in the lifted variables are not exactly quadratic, we will employ in our numerical experiments a regularization penalty to combat errors due to the misspecification of the learned model.

\subsection{CVRC test problem and learning task}\label{ssec: CVRC test problem}
The CVRC geometry and operating conditions are described in \Cref{sssec: geom op cond}, and the learning task is described in \Cref{sssec: learning task}.

\subsubsection{Geometry and operating conditions}\label{sssec: geom op cond}
The CVRC geometry for our problem is a truncated segment of the full combustor studied in~\cite{harvazinski2015coupling}, and we seek to model the dynamics of the truncated domain under the same inlet operating conditions as in that work. The combustor is depicted in~\Cref{fig: CVRC geom} and is cylindrically symmetric around the $x_1$-axis. The length of the entire combustor is approximately 28 cm in the axial $x_1$-direction. 
The dynamics are driven by forcing in the form of a 10\% fluctuation in the back pressure $p_b(t)$ at the combustor outlet at the right boundary of the domain:
\begin{align}
  p_b(t) = p_{b,0} +0.1\, p_{b,0}\sin (2\pi \nu \, t),
\end{align}
where the forcing frequency is $\nu = 2000$ Hz and the baseline back pressure is $p_{b,0} = 1.1$~MPa. 
\begin{figure}[h]
  \centering
  \includegraphics[width=\textwidth]{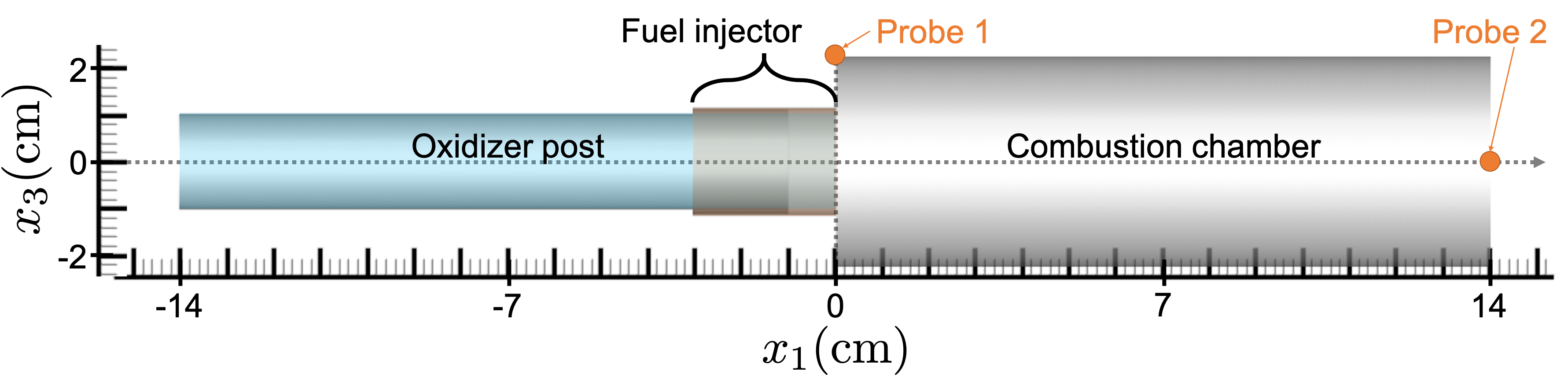}
  \caption[CVRC geometry]{CVRC geometry viewed from negative $y$-axis and locations of pressure probes. Probe 1 has coordinates $(0,0,z_{\rm max})$ and Probe 2 $(x_{\rm max},0,0)$.\label{fig: CVRC geom}}
\end{figure}

\subsubsection{Learning task and available data}\label{sssec: learning task}
Simulation data from solving the original nonlinear equations \cref{eq: CVRC cons equations} on a mesh with $n = 2,637,771$ cells is available at 5000 simulation timesteps spaced $\Delta t = 10^{-6}$ seconds apart, corresponding to 5 milliseconds of simulation time between $t=15$~ms and $t = 20$~ms. Although the conservative formulation \cref{eq: CVRC cons equations} is used to discretize the governing equations, yielding
\begin{align*}
   n\cdot d = 2,637,771 \cdot 7 = 18,464,397
\end{align*}
degrees of freedom in the simulation, snapshots are saved for each of the following variables: $\rho$, $u_1$, $u_2$, $u_3$, $p$, $Z_m$, $C$, $T$, and enthalpy $h$. The runtime to compute all 5 milliseconds of simulation time was over 45,000 CPU hours.

The learning task is to use data from the first $K = 3{\color{red},}000$ simulation time steps, from $t = 15$ ms to $t = 17.999$ ms to compute the POD basis and to train a Lift \& Learn model. Loading these 3{\color{red},}000 snapshots requires 471 GB of RAM{\color{red}, which makes the processing of this data set memory intensive}. For this example, the PDE formulation leads us to use the cell volumes of the simulation to approximate POD basis functions in the $L^2(\Omega)$-norm. We will then use the resultant learned model to predict from $t = 15$ ms to $t = 20$ ms:\ the prediction task is thus to reconstruct the dynamics of the training period \emph{and} to predict dynamics two milliseconds beyond the training period. We will then assess the accuracy of the learned model predictions in pressure, temperature, and the flamelet manifold parameters.

We use the available simulation data to compute the learning variables in~\cref{eq: CVRC lifted variables} for each of the first 3{\color{red},}000 time steps. Because the lifted variables have magnitudes that differ by up to seven orders of the magnitude---pressure has values in MPa ($\mathcal{O}(10^6)$) and $\rho C$ and $\rho Z_m$ are generally $\mathcal{O}(10^{-1})-\mathcal{O}(1)$, we center and scale the data in each lifted variable before computing a POD basis. The snapshots in each lifted variable are first centered around the mean field (over the 3{\color{red},}000 training timesteps) in that variable, and then scaled by the maximum absolute value of that variable so that the values for each lifted variable do not exceed the range $[-1,1]$.

The POD basis is computed in the $L^2(\Omega)$ norm by weighting the discrete norm by the cell volumes of the simulation data.  Singular values of the weighted data are shown in the left half of \Cref{fig: CVRC svals energy}. Let $\eta_r$ denote the relative fraction of energy retained by the $r$-dimensional POD basis,
\begin{align}
  \eta_r = 1 - \sum_{i=r+1}^{3{\color{red},}000} \sigma_i^2 \bigg/ \sum_{i=1}^{3{\color{red},}000}\sigma_i^2,
\end{align}
where $\sigma_i$ is the $i$-th singular value of the data matrix. 
This retained energy is shown on the right in \Cref{fig: CVRC svals energy}.

\begin{figure}[h]
  \centering
  \includegraphics[width=\textwidth]{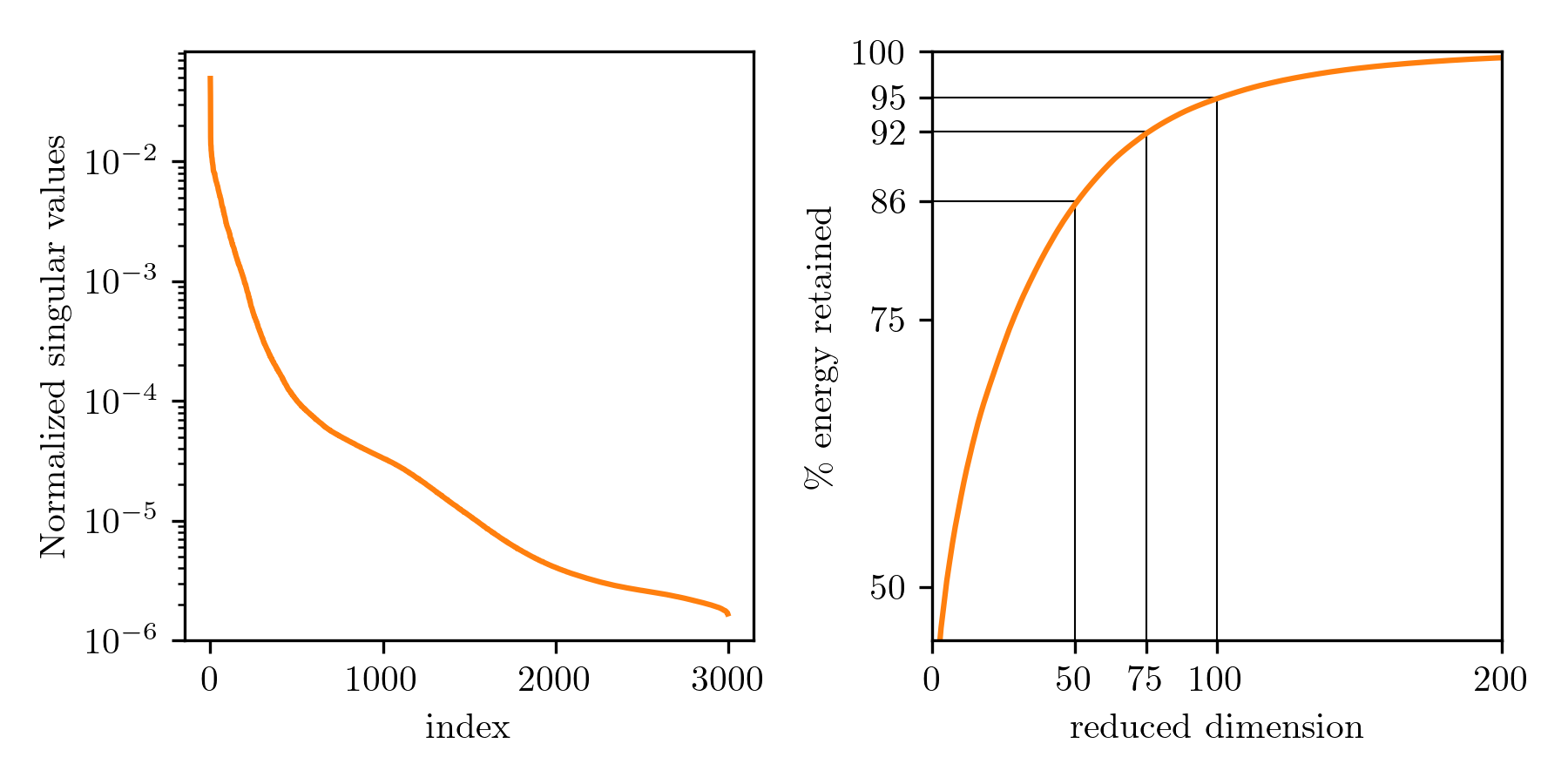}
  \caption[Singular values of CVRC training data]{\emph{Left:} Singular values of scaled and centered lifted data, normalized by the sum of all singular values. \emph{Right:} Energy retained by POD bases for the lifted CVRC data. \label{fig: CVRC svals energy}}
\end{figure}

\Cref{fig: CVRC svals energy} highlights the retained energy at $r = 50$, $r= 75$, and $r = 100$, which are the basis sizes for which we will fit a reduced model: we discuss the choice of these basis sizes in \Cref{sssec: reg op inf}. {\color{red}Due to the complexity of the dynamics of this transport-dominated reacting flow,} the decay of the singular values of the data is relatively slow: even with 100 basis functions, only 95\% of the POD energy is retained. This is lower than what is considered ideal for traditional projection-based reduced modeling (usually 99.9\% or above). However, for our 3{\color{red},}000 available snapshots, the size of the model that can be uniquely specified by the available data limits the sizes of the models we learn.

\subsection{Lift \& Learn Operator Inference formulation}\label{ssec: CVRC op inf formulation}
We now present the regularized operator inference problem used to learn a reduced model for prediction of the CVRC dynamics. \Cref{sssec: CVRC model param} describes the form of the model we learn. \Cref{sssec: reg op inf} describes the regularization strategy employed. For this problem, tuning of the regularization parameters is critical to the performance of the method, leading to similar results when the $L^2(\Omega)$ and Euclidean norms are used. The principal purpose of this example is to demonstrate the scalability of our method, so we present results solely for the $L^2(\Omega)$-norm approach that is the focus of this work.

\subsubsection{Parametrization of the learned model}\label{sssec: CVRC model param}
Because most of the terms of the governing equations in the learning variables are linear or quadratic in the learning variables, constant, or linear in the forcing, we will prescribe the following model form for the reduced model to be learned: 
\begin{align}
  \pdv{\tilde w}t = \tilde A\tilde w + \tilde H(\tilde w\otimes \tilde w) + \tilde G + \tilde B p_\nu(t),
  \label{eq: CVRC model form}
\end{align}
where the reduced lifted state $\tilde w$ has dimension $r$, and the reduced operator dimensions are given by $\tilde A\in\R^{r\times r}$, $\tilde H\in\R^{r\times r^2}$, $\tilde G\in\R^r$, and $\tilde B\in\R^r$, and the input is defined by $p_\nu(t) = p_b(t)-p_{b,0}$.

\subsubsection{Regularized operator inference minimization}\label{sssec: reg op inf}
As discussed in~\Cref{ssec: lifted CVRC eqs}, the true governing equations in the learning variables contain non-quadratic terms that are not reflected in~\cref{eq: CVRC model form}. To reduce the tendency to overfit to the unmodeled dynamics, we regularize the Operator Inference problem as follows:
\begin{align}
  \begin{split}
    \arg\min_{\tilde A,\tilde H,\tilde B,\tilde G} \bigg(\frac 1K\sum_{k=1}^K &\left\| \tilde A\tilde w_k + \tilde H(\tilde w_k\otimes\tilde w_k) + \tilde G+ \tilde B p_\nu(t_k) - \dot{\tilde w}_k\right\|_2^2 \cdots \\ 
    &\qquad +\gamma_1\left(\|\tilde A\|_F^2 + \|\tilde B\|_F^2 + \|\tilde G\|_F^2\right) + \gamma_2\|\tilde H\|_F^2\bigg),
  \end{split}
  \label{eq: CVRC reg op inf}
\end{align}
where $\gamma_1$ is the weighting of the regularization penalty on the linear, constant, and input operators, and $\gamma_2$ is the regularization penalty on the quadratic operator. We weight the quadratic operator separately because the quadratic terms have a different scaling than the linear terms. The regularization weights $\gamma_1$ and $\gamma_2$ are tuned using a procedure that minimizes the model prediction error over the training period subject to a growth constraint (see \Cref{sec: reg tuning} for additional information)~\cite{mcquarrie2021data}.

Note that even after adding regularization to the minimization in~\cref{eq: CVRC reg op inf}, the minimization still decomposes into $r$ independent least-squares problems, one for each row of the reduced operators. Once the structural redundancy of the Kronecker product $\tilde w\otimes\tilde w$ is accounted for, each of these $r$ independent least-squares problems has $r + \frac{r(r+1)}2+2$ degrees of freedom. The number of degrees of freedom of each of the independent least-squares problems for each of the model sizes we learn, and the regularization weights chosen by our tuning procedure, are tabulated in \Cref{tab: CVRC DOF gamma}. 

\begin{table}[h]
  \centering
  \begin{tabular}{c | c | c c}
    Model size $r$ & operator inference degrees of freedom & $\gamma_1$ & $\gamma_2$ \\
    \hline
    50 & 1327 & 6.95e4 &  1.62e12\\
    75 & 2927 & 3.36e4 & 3.79e11\\ 
    100 & 5152 & 2.07e2 & 1.83e11\\
    \multicolumn{4}{c}{}
  \end{tabular}
  \caption[Learned model sizes and regularization weights for CVRC simulation]{Operator inference specifications for CVRC numerical experiments. The number of degrees of freedom in operator inference is the size of each of the $r$ independent least-squares problems, $r+\frac{r(r+1)}2+2$. Note that the available data set can only fully specify up to 3{\color{red},}000 degrees of freedom without regularization. The tabulated $\gamma_1$ and $\gamma_2$ values are those chosen by the regularization tuning procedure described in \Cref{ssec: CVRC op inf formulation}.}
  \label{tab: CVRC DOF gamma}
\end{table}

\Cref{tab: CVRC DOF gamma} shows that the three model sizes that we learn represent three different regimes of the least squares Operator Inference problem. Recall that we have $K=3{\color{red},}000$ training snapshots available to us. The $r = 50$ model with 1{\color{red},}327 degrees of freedom represents the overdetermined regime, where there are many more data than degrees of freedom. The $r = 100$ model with 5{\color{red},}152 degrees of freedom represents the underdetermined regime, where there are many more degrees of freedom than data. Finally, the $r =75$ model with 2{\color{red},}927 degrees of freedom is the largest model of the form \cref{eq: CVRC model form} that can be fully specified by the 3{\color{red},}000 data.
In our numerical results, we will compare the performance of the three learned models representing these three regimes.

\subsection{Lift \& Learn model performance}\label{ssec: CVRC prediction results}
We now examine the prediction performance of the learned reduced models of sizes $r \in \{50, 75, 100\}$ in the key quantities of interest --- the pressure, temperature, and flamelet manifold parameters.
\Cref{ssec: metrics} describes the metrics we use to assess predictions in the quantites of interest and \Cref{sssec: results} presents and discusses our results.

\subsubsection{Performance metrics}\label{ssec: metrics}
The pressure, temperature, and flamelet manifold parameters are the primary quantities of interest, due to the role of pressure in combustion instability, the temperature limits of materials, and the full specification of the chemical model by the flamelet manifold parameters. 
To evaluate the Lift \& Learn pressure predictions, we will measure the predicted pressure at two point probes, whose locations are shown in \Cref{fig: CVRC geom}.
Because acoustics are global, these pointwise error measures are accurate reflections of the overall error in pressure prediction.

In contrast, the dynamics in the temperature and flamelet manifold parameters are transport-dominated, so pointwise error measures can be misleading. To assess the performance of the learned model in these variables, we instead consider the cross-correlation between the reference predicted field and the field predicted by the learned model at time $t$. This measure is informed by the use of cross-correlation in particle image velocimetry analysis of fluid flows, where the cross-correlation is used as a measure of similarity between image frames at different times~\cite{keane1992theory}. Here, we will use the cross-correlation between the fields predicted by different models \emph{at the same time} as a measure of the similarity between predictions.

That is, let $\bT^{\rm ref}(t)\in\R^n$ and $\bT^{\rm L\&L}(t)\in\R^n$ denote the reference and learned model discrete temperature predictions, respectively, and define
\begin{align}
  \label{eq: T correlation}
  R_T(t) &= \frac{\sum_{i=1}^n (\bT^{\rm ref}_i(t) - \mu_\bT^{\rm ref}(t)) (\bT^{\rm L\& L}_i(t) - \mu_\bT^{\rm L\&L}(t))}{\sigma_\bT^{\rm ref} \sigma_\bT^{\rm L\&L}}, 
\end{align}
where $\mu_\bT^{\rm ref}(t)\in\R$ and $\mu_\bT^{\rm L\&L}(t)\in\R$ are the mean values, respectively, and $\sigma_\bT^{\rm ref}(t)\in\R$ and $\sigma_\bT^{\rm L\&L}(t)\in\R$ are the standard deviations, respectively, of $\bT^{\rm ref}(t)\in\R^n$ and $\bT^{\rm L\&L}(t)\in\R^n$, respectively. This measure $R_T(t)$ is the Pearson correlation coefficient between the reference and learned model predictions of temperature at time $t$.
We define the correlation measures $R_{Z_m}(t)$ and $R_{C}(t)$ in a similar way. 
By definition, the correlations $R_*(t)$ take on values in the range $[-1,1]$, where $R_*(t)=1$ indicates perfect correlation and $R_*(t)=-1$ perfect anti-correlation, with $R_*(t)=0$ indicating no correlation.

\subsubsection{Results and discussion}\label{sssec: results}
The regularized operator inference minimization in~\cref{eq: CVRC reg op inf} is solved for $r = 50$, $r = 75$, and $r = 100$, with the regularization weights given in \Cref{tab: CVRC DOF gamma}. The inferred reduced operators define a model of the form \cref{eq: CVRC model form}. This inferred model is then numerically integrated using a second-order explicit Runge-Kutta time-stepping scheme to predict the combustor dynamics from the initial condition at $t = 15$~ms. 

\Cref{fig: probe both traces,fig: CVRC temperature snaps,fig: CVRC flamelet snaps,fig: CVRC progress snaps} give an overview of the learned model prediction performance for the primary quantities of interest. \Cref{fig: probe both traces} shows the pressure measured at the two probes, while \Cref{tab: CVRC pressure errors} reports $L^2(\Omega)$-errors in the predicted pressure fields. \Cref{fig: CVRC temperature snaps,fig: CVRC flamelet snaps,fig: CVRC progress snaps} show cross-sections of the temperature, flamelet mixture mean, and reaction progress fields taken at the $x_2=0$ plane of the combustor at each millisecond mark. Field reconstruction is done by multiplying the POD basis vectors by their reduced state coefficients and post-processing the reconstructed states to eliminate non-physical values, i.e., temperatures below 0 were set to 0 and flamelet model parameters outside the range $[0,1]$ were set to the nearest value in the range. Non-physical predictions are not unusual in projection-based reduced modeling, since the reduced models evolve coefficients of basis functions rather than the physical state variables themselves. Formulations that embed variable range constraints within the Operator Inference framework are a possible direction of future work, following the ideas proposed in~\cite{huang2019investigations}.

\begin{figure}[h]
  \centering
  \includegraphics[width=\textwidth]{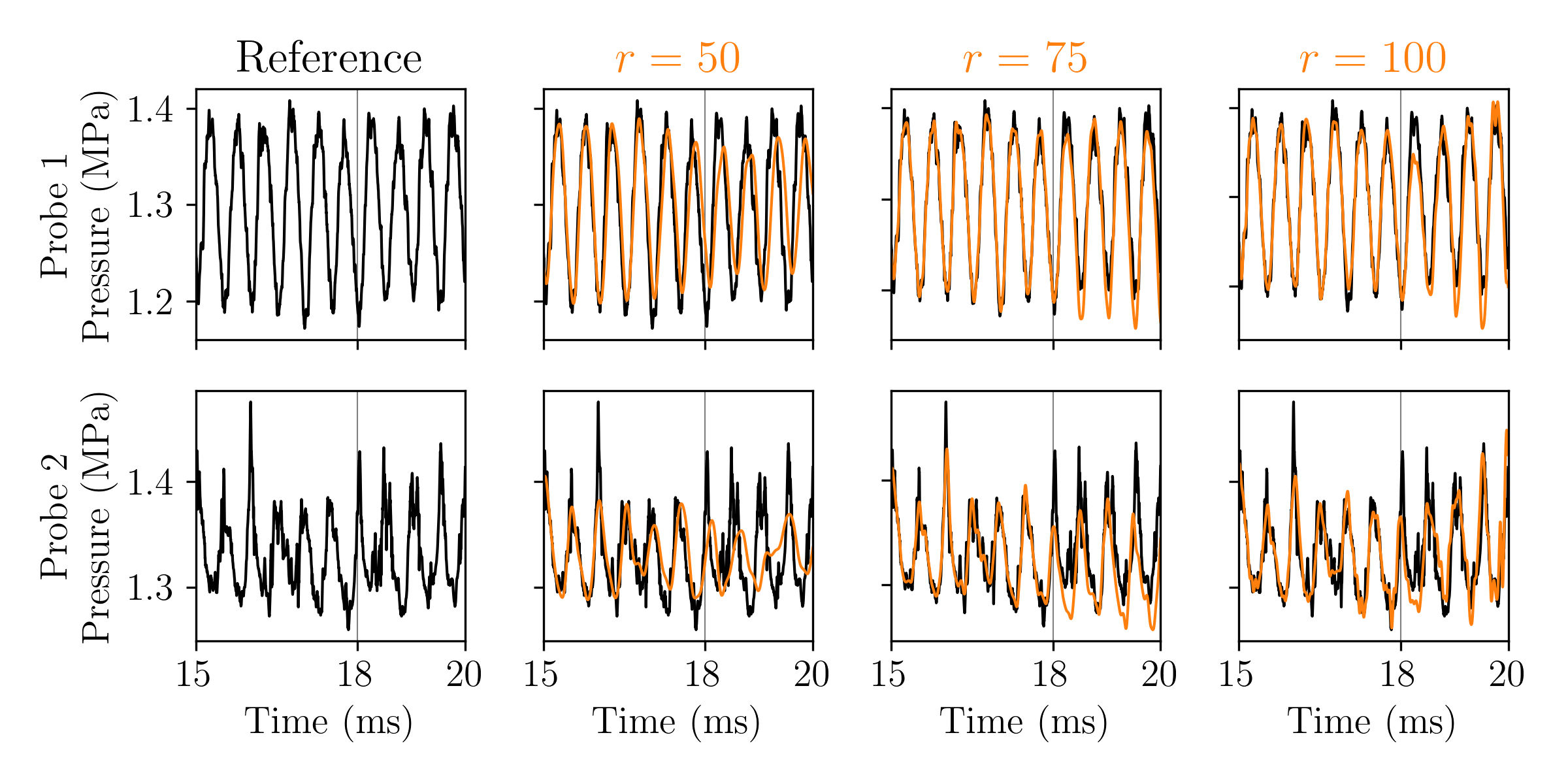}
  \caption[CVRC pressure prediction at probe locations]{Pressure predictions at probe locations. Probe 1 has coordinates $(0,0,x_{3,\rm max})$ and Probe 2 has coordinates $(x_{1,\rm max},0,0)$. Training period ends at $t = 18$ ms.}
  \label{fig: probe both traces}
\end{figure}

\begin{table}[h]
  \centering
  \begin{tabular}{c | c  c}
    & \multicolumn{2}{c}{Pressure error} \\
    Model size $r$ & Reconstruction & Prediction\\
    \hline
    50 & 0.015 & 0.032\\
    75 & 0.007 & 0.018\\ 
    100 & 0.008 & 0.017
  \end{tabular}
  \caption{Mean relative $L^2(\Omega)$-errors in pressure predictions of the Operator Inference learned models during the $t\in[15, 18)$ reconstruction period and the $t\in[18, 20]$ prediction period.}
  \label{tab: CVRC pressure errors}
\end{table}

The predicted pressure fields of all three learned reduced models demonstrate good agreement with the reference data of the original high-dimensional simulation, with reconstruction errors around 1-2\% and generalization errors around 2-3\% in the prediction phase (\Cref{tab: CVRC pressure errors}). In the pressure traces in \Cref{fig: probe both traces}, we observe excellent agreement between the learned reduced models and the original simulation in the dominant frequency and amplitude of the pressure oscillations during the reconstruction phase, with agreement in amplitude deteriorating slightly in the prediction beyond 18ms. 

\begin{figure}
  \includegraphics[width=\textwidth]{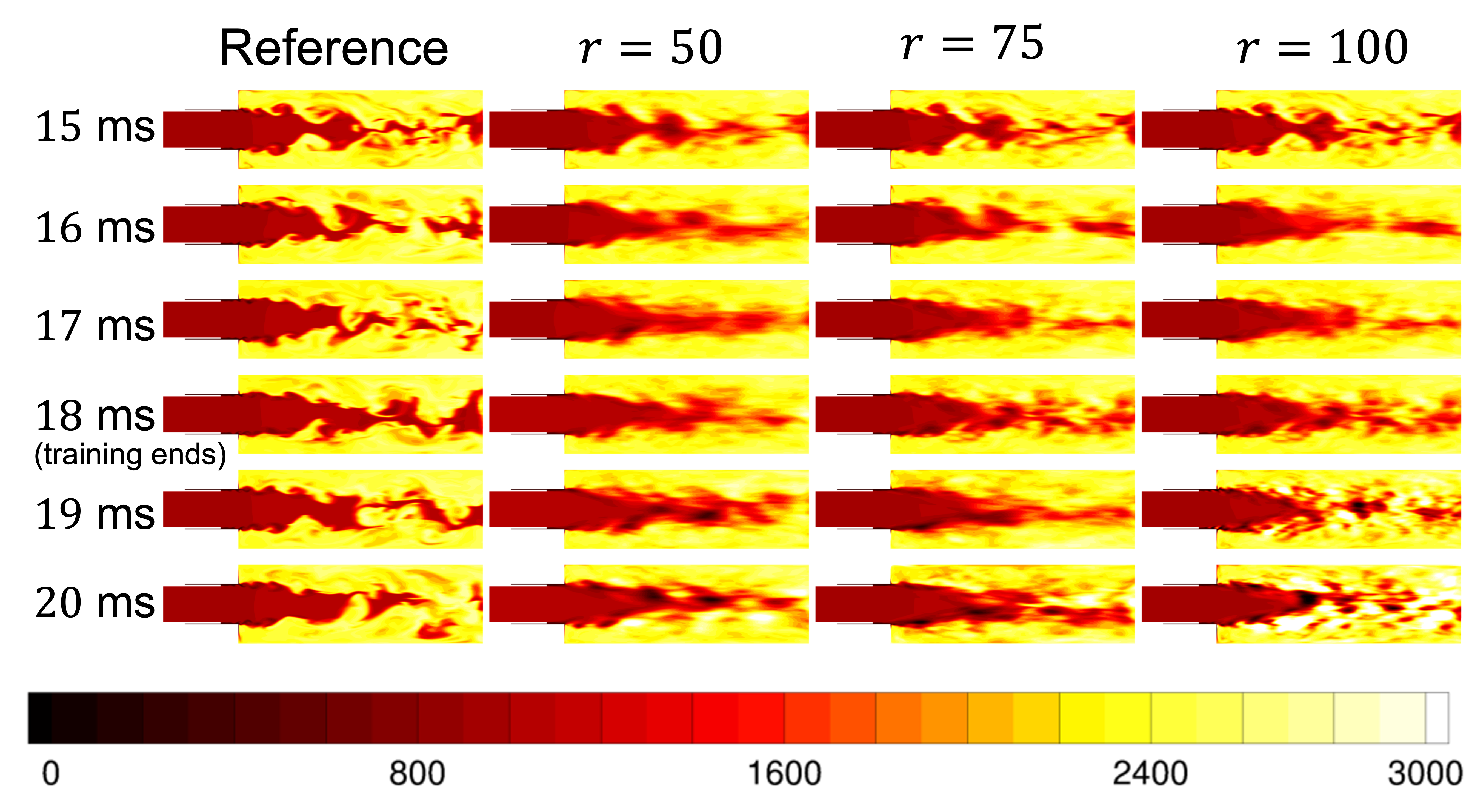}
  \caption[CVRC temperature predictions]{Temperature (Kelvin) field predictions at $x_2=0$ plane}
  \label{fig: CVRC temperature snaps}
\end{figure}

\begin{figure}
  \includegraphics[width=\textwidth]{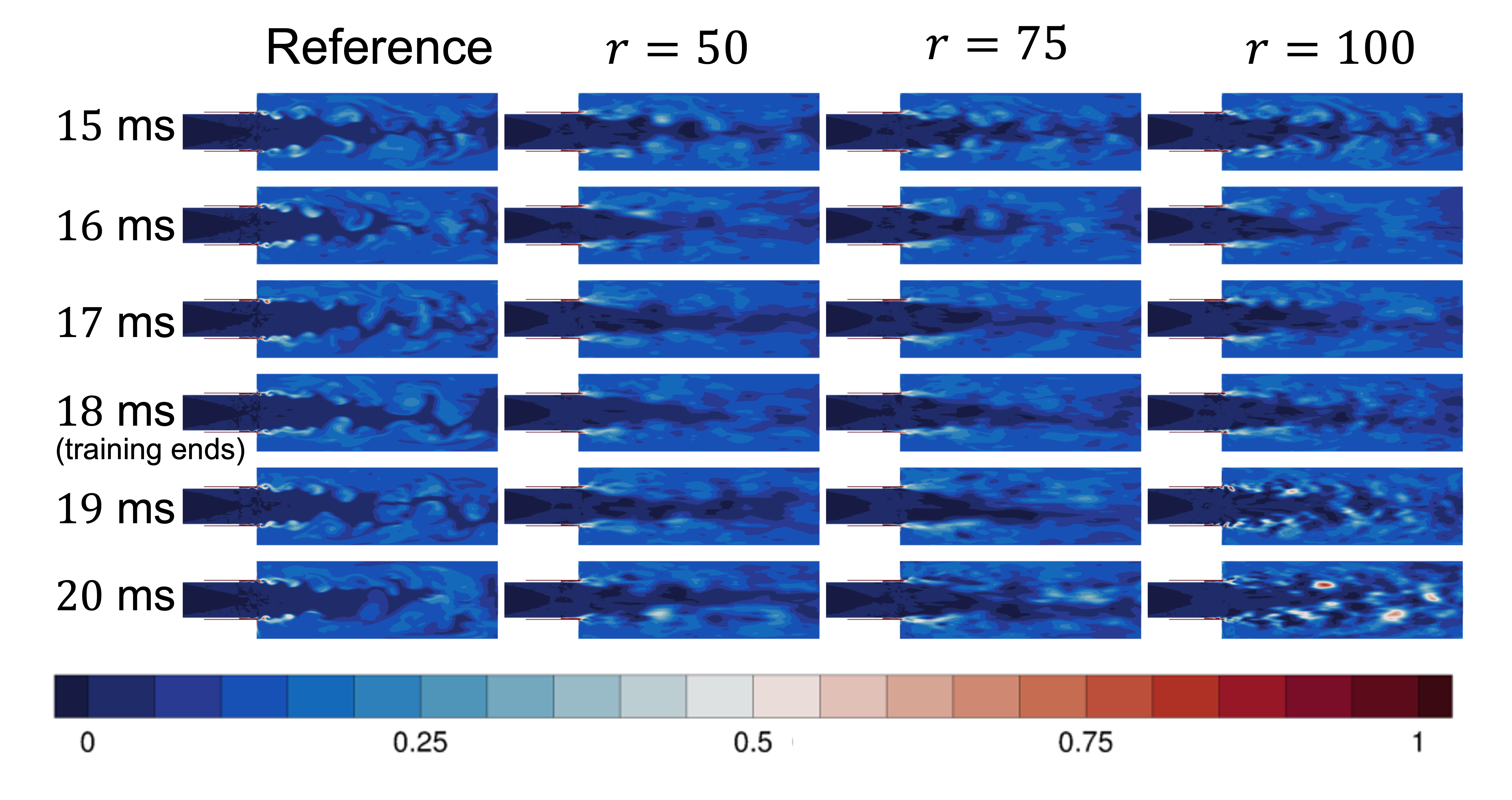}
  \caption[CVRC flamelet mixture mean predictions]{Flamelet mixture mean ($Z_m$) field predictions at $x_2=0$ plane}
  \label{fig: CVRC flamelet snaps}
\end{figure}

\begin{figure}
  \includegraphics[width=\textwidth]{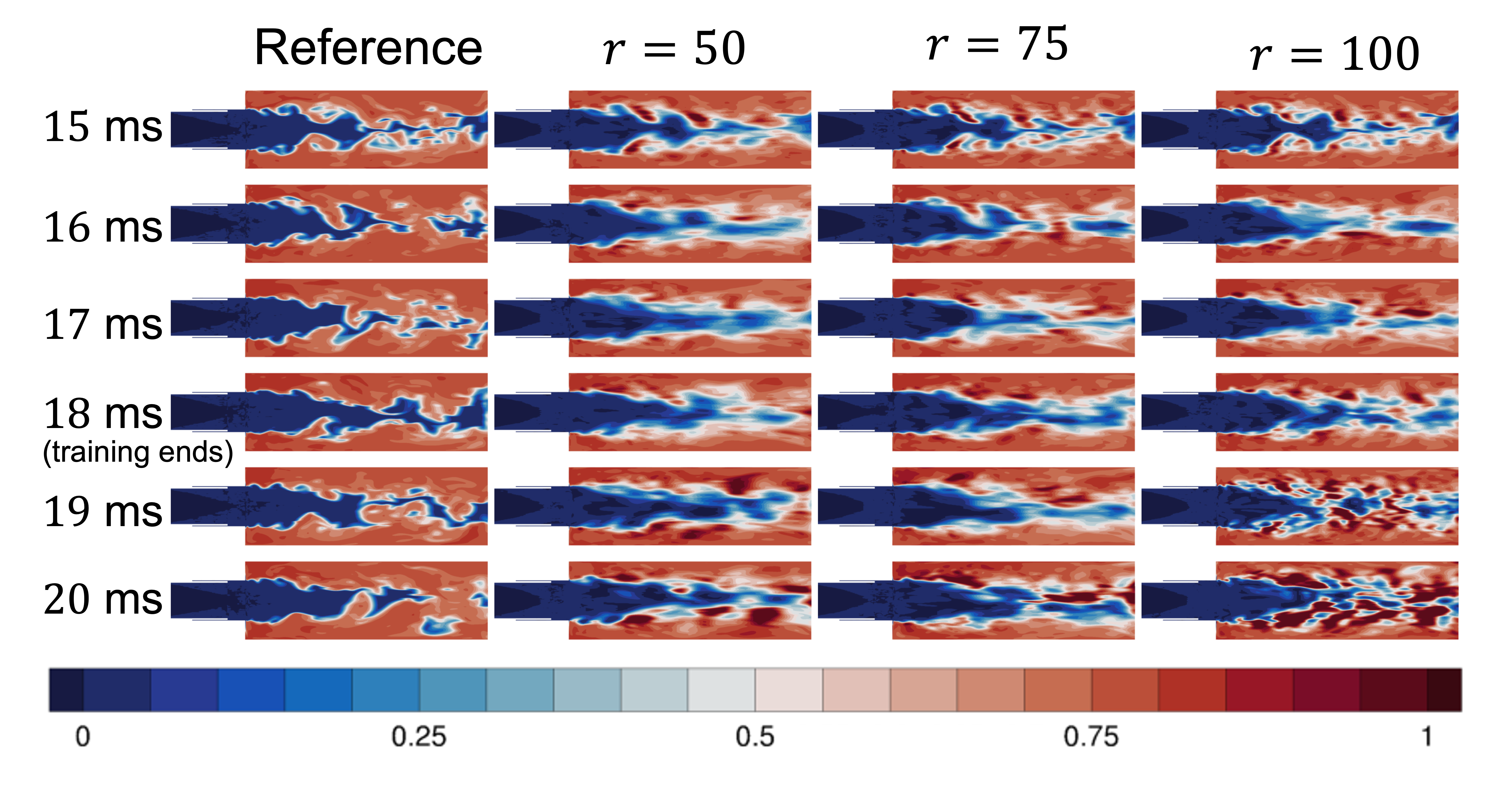}
  \caption[CVRC reaction progress predictions]{Reaction progress variable ($C$) field predictions at $x_2=0$ plane}
  \label{fig: CVRC progress snaps}
\end{figure}

The cross sections of the temperature and combustion parameter fields in \Cref{fig: CVRC temperature snaps,fig: CVRC flamelet snaps,fig: CVRC progress snaps} qualitatively illustrate the tradeoff inherent in choosing a basis size $r$ for learned a reduced model. The largest $r=100$ case is the most expressive, which can be seen by comparing the initial $t=15$ ms snapshots which simply project the full reference data onto the basis. The $t = 15$ ms snapshots for the $r = 100$ case capture the most structure of the fields, including smaller-scale structures. The $r = 75$ case captures fewer smaller-scale structure and the $r = 50$ case smears out all but the coarser flow features. However, due to the limited available data, the $r= 100$ model cannot be fully specified by the data, and the dynamics of the learned model that results from regularization lead to snapshots at later times that exhibit poorer agreement with the reference simulation (compare for example the $t = 16$ ms temperature snapshots between $r = 100$ and $r = 75$). Another drawback of the underspecified $r = 100$ learning problem is that the resulting model predicts several regions where the temperature and flamelet model parameters reach their extremal values, especially at the final $t = 20$ ms (note regions of temperature at 0 K and 3{\color{red},}000 K, well outside the range of the reference).  These extreme predictions in the $r =100$ case illustrate the pitfalls of attempting to fit a model to less data than there are degrees of freedom in the model. While the $r = 50$ and $r=75$ models exhibit some extreme predictions, these extreme regions are more limited, illustrating the importance of choosing a model size $r$ for which the data allow the learning problem to be fully specified. Building multiple localized Operator Inference reduced models instead of one global reduced model is one way to address this issue, since the dimension of each local reduced model can be kept small~\cite{geelen2021localized}.

The trade-off between the expressivity of larger $r$ and ability at smaller $r$ to learn accurate dynamics from limited data is illustrated quantitatively in \Cref{fig: TCZ corr coeff}, which plots the cross-correlation metric defined in \Cref{ssec: metrics} for temperature and the flamelet model parameters. These correlation measures show that while all three learned reduced models yield field predictions that are well-correlated with the reference simulation data during the $t<18$ ms reconstruction regime, correlation deteriorates sharply for the $t>18$ ms prediction phase for the larger two model sizes, $r = 75$ and $r = 100$. This quantitative measure illustrates clearly that the $r = 50$ model achieves the best generalization performance in the sense that its reconstruction and prediction correlations are similar, avoiding the sharp decline at the transition to prediction of the other models. The poorer generalization performance of the larger two models is likely the result of the both limited data and model misspecification -- as described in \Cref{sssec: CVRC model param}, the lifted governing equations of the CVRC contain mostly linear and quadratic terms that are reflected in the form of the reduced model, but not all terms have this form. The Operator Inference learning problem for $r = \{75,100\}$ may be more sensitive to non-quadratic dynamics reflected in the data, leading to poor generalization performance. In contrast, in the $r = 50$ case there is sufficient data to avoid overfitting in the more limited number of degrees of freedom.

\begin{figure}[h]
  \centering
  \includegraphics[width=0.97\textwidth]{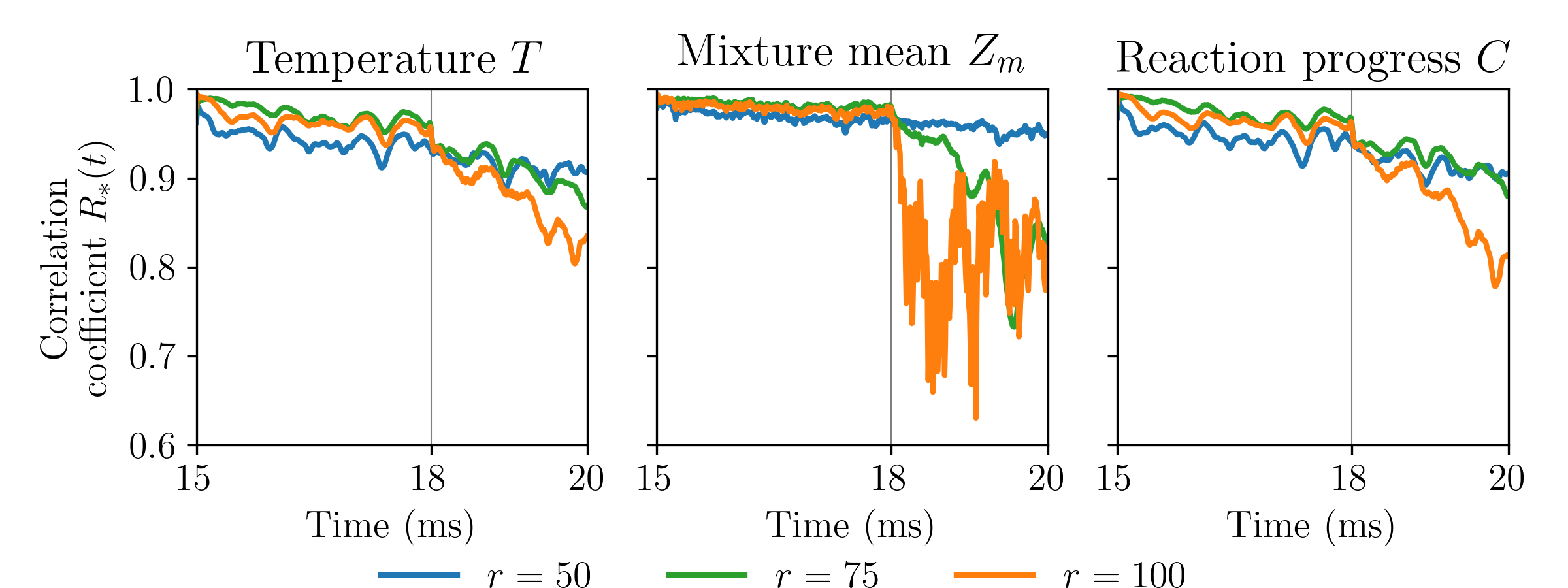}
  \caption[Cross-correlation of Lift \& Learn temperature and flamelet parameter predictions]{Cross-correlation between learned model prediction and reference data for temperature, flamelet mixture mean, and reaction progress variable. Training data ends at $t=18$~ms.}
  \label{fig: TCZ corr coeff}
\end{figure}

Overall, while the learned reduced models fail to capture some aspects of the CVRC dynamics, including some of the high-frequency pressure oscillations and smaller-scale structures of the flow fields, when there is sufficient data for the model size, the learned model can accurately predict large-scale structures (as measured by correlation coefficients above 0.8) and the dominant amplitude and frequency of the pressure oscillations. Our learned models achieve this prediction ability with a dimension reduction of five orders of magnitude relative to that of the reference simulation. This allows the learned reduced model to simulate 5 milliseconds of simulation time in a matter of a few seconds of run time, while the high-dimensional simulation that generated the data set required approximately 45{\color{red},}000 CPU hours to generate. This dimension reduction is made possible by our \emph{physics-informed} model learning framework, which transforms the governing equations to mostly-quadratic form, fits a quadratic reduced model, and regularizes against the model's misspecification of the remaining non-quadratic terms.

\section{Conclusions}\label{sec: conclusions}
We have presented Operator Inference for PDEs, a new formulation for scientific machine learning which learns reduced models for systems governed by nonlinear PDEs by parametrizing the model learning problem by low-dimensional polynomial operators which reflect the known polynomial structure of the governing PDE. Operator Inference can be applied to systems governed by more general nonlinear PDEs through Lift \& Learn, which uses lifting transformations to expose quadratic structure in the governing PDE. 
By building this structure due to the governing physics into the learned model representation, our scientific machine learning method reduces the need for massive training data sets when compared to generic representations such as neural networks.

Our Operator Inference formulation generalizes the Operator Inference method previously developed in~\cite{Peherstorfer16DataDriven} for systems of ODEs, allowing the learned model parametrization to be determined based solely on the form of the underlying governing physics rather than on the form of the discretization of the PDE. The formulation in the PDE setting learns a mapping between low-dimensional subspaces of the underlying infinite-dimensional Hilbert space, allowing more general variable transformations between Hilbert spaces to be considered, in contrast to~\cite{QKPW19LiftLearn} which restricts the class of acceptable variable transformations to pointwise transformations. A numerical demonstration for a heat equation problem on a non-uniform grid illustrates that the formulation in the PDE setting yields an algorithm that finds basis functions consistent with the underlying continuous truth and can lead to a lower error than an ODE-based formulation.

{\color{red}We demonstrate the potential of the proposed method to scale to problems of high-dimension by learning} reduced operators for a three-dimensional combustion simulation with over 18 million degrees of freedom that requires 45{\color{red},}000 CPU hours to simulate {\color{red}five} milliseconds of simulation time. The resulting reduced models accurately predict the amplitude and frequency of pressure oscillations, a key factor in the development of combustion instabilities, with just 50 reduced states, a five order of magnitude dimension reduction. The reduced model can simulate {\color{red}five} milliseconds of simulation time in just a few seconds, a speed-up which makes the learned reduced model suitable for use in the many-query computations which support engineering decision making.

\section*{Acknowledgments}
The authors are grateful for Cheng Huang and Chris Wentland for the data used in the CVRC numerical experiments as well as for several helpful discussions.
This work was supported in parts by the National Science Foundation Graduate Research Fellowship Program, the Fannie and John Hertz Foundation, the US Air Force Center of Excellence on Multi-Fidelity Modeling of Rocket Combustor Dynamics award FA9550-17-1-0195, the Air Force Office of Scientific Research MURI program awards FA9550-15-1-0038,  FA9550-18-1-0023 and FA9550-21-1-0084, the US Department of Energy Applied Mathematics MMICC Program award DE-SC0019303, and the SUTD-MIT International Design Centre. 

\appendix
\section{Regularization tuning strategy}\label{sec: reg tuning}
To choose the regularization weights $\gamma_1$ and $\gamma_2$, we employ a variant of the regularization tuning procedure of~\cite{mcquarrie2021data}. That is, we use the regularization weights that minimize (on a two-dimensional grid) the training error, subject to a constraint that the resultant reduced model have bounded growth within a trial integration period. In more detail, we solve the minimization \cref{eq: CVRC reg op inf} on a $40\times 40$ grid of values spaced log-uniformly in $(\gamma_1,\gamma_2)\in[10^2, 10^8]\times[10^{9},10^{14}]$. The resultant reduced model is then integrated from $t = 15$ ms to $t = 22$ ms (two milliseconds \emph{beyond} the desired prediction time), and the final regularization parameters are chosen to be the regularization parameters that minimize the error in the predicted reduced coefficients over the training period, subject to the following growth constraint:
\begin{align}
  \label{eq: regularization growth constraint}
  \max_{\substack{i\leq r \\ k \leq 7000}}\left|\hat\bw_i^{\rm trial}(t_k) - \bar\bw_i\right| \leq 1.2\max_{\substack{i\leq r\\ k \leq 3000}}\left| \hat\bw_i^{\rm training}(t_k) - \bar\bw_i\right|,
\end{align}
where $\bar\bw_i$ is the mean of the $i$-th reduced coefficient over the 3000 training time steps, $\hat\bw_i^{\rm trial}(t_k)$ is the value of the $i$-th reduced coefficient predicted by the inferred model at time $t_k$, and $\hat\bw_i^{\rm training}(t_k)$ is the value of the $i$-th reduced coefficient in the training data at time $t_k$. In words, \cref{eq: regularization growth constraint} requires that when the reduced model is integrated for the full 7 ms period, no single reduced coefficient deviates more from its training mean more than 20\% more than the maximum deviation from the training mean during the training period. We seek models that satisfy this constraint to avoid the selection of regularization parameters that may predict the dynamics of the training period well but become unstable beyond the training period.

We note that all computations in this tuning strategy---the regularized minimization in~\cref{eq: CVRC reg op inf}, the integration of the resulting reduced model, and the computation of the error and growth in the reduced coefficients---are dependent only on $r$-dimensional quantities. This strategy for tuning the regularization parameters is therefore computationally inexpensive. The regularization weights chosen by this tuning strategy for the three model sizes we test are tabulated in \Cref{tab: CVRC DOF gamma}.

\section{Derivation of lifted CVRC governing PDEs }\label{appendix}
\subsection{Derivation of specific volume equation}
Recall that
\begin{align}
  \pdv \rho t = -\nabla \cdot (\rho u) = -\pdv{\rho u_x}x - \pdv{\rho u_y}y - \pdv{\rho u_z}z.
\end{align}
Since $\zeta = \frac1\rho$, applying the chain rule yields
\begin{align}
  \pdv{\zeta}t &= \pdv{}t\frac1\rho = -\frac1{\rho^2}\pdv\rho t = \frac1{\rho^2} \left(\pdv{\rho u_x}x + \pdv{\rho u_y}y + \pdv{\rho u_z}z\right) \nonumber \\
  &=\frac1{\rho^2}\left(\pdv\rho x u_x + \rho \pdv{u_x}x + \pdv\rho y u_y + \rho \pdv{u_y}y + \pdv\rho z u_z + \rho \pdv{u_z}z \right) \nonumber \\
  &= -\frac1{\rho^2}\pdv \rho x(-u_x) -\frac1{\rho^2}\pdv \rho y(-u_y) -\frac1{\rho^2}\pdv \rho z(-u_z) +\frac1\rho \left(\pdv{u_x}x + \pdv{u_y}y + \pdv{u_z}z\right)\nonumber \\
  &= -\pdv\zeta x u_x -\pdv\zeta y u_y -\pdv \zeta z u_z +\zeta(\nabla\cdot u) = -\nabla\zeta \cdot u + \zeta(\nabla\cdot u).
\end{align}

\subsection{Derivation of velocity equations}
We derive the equation for the evolution of the $x$-velocity $u_x$ from the equations for the $x$-momentum $\rho u_x$ and for the density:
\begin{align}
  \pdv{u_x}t &= \frac1\rho \left(\pdv{\rho u_x}t - \pdv{\rho}t u_x\right)  \nonumber \\
  & =\frac1\rho \bigg(-\pdv{\rho u_x^2 + p}x - \pdv{\rho u_x u_y}y - \pdv{\rho u_x u_z}z + \pdv{\tau_{xx}}x + \pdv{\tau_{xy}}y + \pdv{\tau_{xz}}z \cdots \nonumber \\
  & \qquad\qquad\qquad+ u_x\pdv{\rho u_x}x + u_x \pdv{\rho u_y}y + u_x\pdv{\rho u_z}z\bigg) \nonumber\\
  & = \frac1\rho\bigg( -\pdv px - u_x^2\pdv\rho x -\rho \pdv{u_x^2}x - u_xu_y\pdv\rho y -\rho\pdv{u_xu_y}y - u_xu_z\pdv\rho z - \rho \pdv{u_xu_z}z \cdots\nonumber \\
  & \qquad\qquad+ u_x^2\pdv\rho x + \rho u_x\pdv {u_x }x+ u_xu_y\pdv\rho y +\rho u_x\pdv {u_y}y + u_x u_z \pdv \rho z + \rho u_x \pdv{u_z}z\cdots \nonumber\\
  &\qquad\qquad\qquad+\pdv{\tau_{xx}}x + \pdv{\tau_{xy}}y + \pdv{\tau_{xz}}z\bigg) \nonumber\\
  & = \frac1\rho\bigg(-\pdv px - \rho \pdv{u_x^2}x - \rho\pdv{u_x u_y}y - \rho \pdv{u_x u_z}z + \rho u_x\left(\pdv {u_x} x + \pdv {u_y}y+\pdv{u_z}z\right)\cdots\nonumber\\
  & \qquad\qquad +\pdv{\tau_{xx}}x + \pdv{\tau_{xy}}y + \pdv{\tau_{xz}}z\bigg) \nonumber\\
  &=-\zeta \pdv px -\pdv{u_x^2}x - \pdv{u_x u_y}y -\pdv{u_x u_z}z + u_x\left(\pdv {u_x} x + \pdv {u_y}y+\pdv{u_z}z\right)\cdots \nonumber \\
  &\qquad\qquad +\zeta\left(\pdv{\tau_{xx}}x + \pdv{\tau_{xy}}y + \pdv{\tau_{xz}}z\right)\nonumber \\
  & = -\zeta\pdv px - 2u_x\pdv{u_x}x - u_x\pdv{u_y}y - u_y\pdv{u_x}y - u_x\pdv{u_z}z - u_z\pdv{u_x}z \cdots\nonumber\\
  &\qquad\qquad + u_x\left(\pdv {u_x} x + \pdv {u_y}y+\pdv{u_z}z\right)+\zeta\left(\pdv{\tau_{xx}}x + \pdv{\tau_{xy}}y + \pdv{\tau_{xz}}z\right) \nonumber\\
  &= -\zeta\pdv px - u_x\pdv{u_x}x - u_y\pdv{u_x}y - u_z\pdv{u_x}z+\zeta\left(\pdv{\tau_{xx}}x + \pdv{\tau_{xy}}y + \pdv{\tau_{xz}}z\right)\nonumber\\
  &= -\zeta\pdv px - u\cdot\nabla u_x +\zeta\left(\pdv{\tau_{xx}}x + \pdv{\tau_{xy}}y + \pdv{\tau_{xz}}z\right).
  \label{eq: final vel eq}
\end{align}
Since the stresses $\tau_{ij}$ are linear in the velocities, \cref{eq: final vel eq} contains only quadratic terms in the specific volume, pressure, and velocity components. By a similar derivation, we obtain analogous quadratic expressions for the velocities in the $y$- and $z$-directions:
\begin{align}
  \pdv{u_y}t &= -\zeta\pdv py -u\cdot\nabla u_y+ \zeta\left(\pdv{\tau_{yx}}x + \pdv{\tau_{yy}}y + \pdv{\tau_{yz}}z\right),\\
  \pdv{u_z}t &= -\zeta\pdv pz -u\cdot\nabla u_z+ \zeta\left(\pdv{\tau_{zx}}x + \pdv{\tau_{zy}}y + \pdv{\tau_{zz}}z\right).
\end{align}

\subsection{Derivation of pressure equation}
To derive an evolution equation for the pressure $p$, we use the simplified state equation (which assumes constant $c_p$):
\begin{align}
  e = c_p T + \frac12 \left(u_x^2 + u_y^2 + u_z^2\right) - \frac p\rho,
\end{align}
or, equivalently, using the relationship $R =\frac{\gamma-1}\gamma c_p$ and the ideal gas law,
\begin{align}
  p = (\gamma-1) \rho e - \frac{\gamma-1}2\rho \left(u_x^2 + u_y^2 + u_z^2\right),
\end{align}
where $\gamma$ is the heat capacity ratio (normally a function of $T$ and the species mass fractions $Y_l$, but assumed constant here).
Then,
\begin{align}
  \pdv pt = (\gamma-1)\left(\pdv{\rho e}t - \frac12\pdv{}t \left( \rho\left(u_x^2 + u_y^2 + u_z^2\right) \right)\right)
  \label{eq: dpdt int 1}
\end{align}
We consider $\pdv{\rho e}t$ term first, using the simplified state equation to express the energy in terms of the pressure, density, and velocities:
\begin{align}
  \pdv{\rho e}t &= -\pdv{}x \left(u_x(\rho e + p)\right) - \pdv{}y \left(u_y(\rho e + p)\right) - \pdv{}z \left(u_z(\rho e + p)\right) + \mathfrak{f}(\tau,j)\nonumber \\
  &=-\pdv{}x\left(u_x\left(\frac{\gamma}{\gamma-1}p + \frac{1}2\rho |u|^2\right)\right) - \pdv{}y\left(u_y\left(\frac{\gamma}{\gamma-1}p + \frac{1}2\rho |u|^2\right)\right) \cdots \nonumber \\
  &\qquad \qquad- \pdv{}z\left(u_z\left(\frac{\gamma}{\gamma-1}p + \frac{1}2\rho |u|^2\right)\right)+ \mathfrak{f}(\tau,j)\nonumber\\
  &= -\frac\gamma{\gamma-1}\left(\pdv{u_xp}x + \pdv{u_yp}y + \pdv{u_zp}z\right) - \frac{1}2\bigg(\pdv{u_x\rho (u_x^2 + u_y^2 + u_z^2)}x  \cdots \nonumber\\
  &\qquad \qquad + \pdv{u_y\rho (u_x^2 + u_y^2 + u_z^2)}y+ \pdv{\rho u_z(u_x^2 + u_y^2 + u_z^2)}z\bigg) + \mathfrak{f}(\tau,j), \label{eq: drhoedt}
\end{align}
where $\mathfrak{f}(\tau,j)$ contains the viscous and diffusive heat flux terms, given by:
\begin{align}
  \begin{split}
    \mathfrak{f}(\tau,j) &= \pdv{}x(u_x\tau_{xx} + u_y\tau_{xy} + u_z\tau_{xz}) + \pdv{}y(u_x\tau_{xy} + u_y\tau_{yy} + u_z\tau_{yz}) \cdots \\
    &\qquad+ \pdv{}z(u_x\tau_{xz} + u_y\tau_{yz} + u_z\tau_{zz}) + \pdv{q_x}x + \pdv{q_y}y + \pdv{q_z}z.
  \end{split}
\end{align}
We now consider the kinetic energy contribution from the $x$-velocity in \cref{eq: dpdt int 1}:
\begin{align}
  \pdv{}t \left(\frac12\rho u_x^2\right) & = \frac12 u_x^2 \pdv \rho t + \frac12\rho \pdv{u_x^2}t = \frac12 u_x^2 \left(-\pdv{\rho u_x}x - \pdv{\rho u_y}y - \pdv{\rho u_z}z\right)+  \rho u_x \pdv{u_x}t \nonumber\\
  &= -\frac12 u_x^2 \left(\pdv{\rho u_x}x + \pdv{\rho u_y}y + \pdv{\rho u_z}z\right)+  \rho u_x \bigg(-\xi \pdv px - u_x \pdv{u_x}x - u_y \pdv{u_x}y\cdots\nonumber \\
  &\qquad\qquad\qquad - u_z\pdv{u_x}z + \xi \left(\pdv{\tau_{xx}}x + \pdv{\tau_{xy}}y + \pdv{\tau_{xz}}z\right) \bigg) \nonumber\\
  &=-\frac12 u_x^2 \left(\pdv{\rho u_x}x + \pdv{\rho u_y}y + \pdv{\rho u_z}z\right)-
   u_x \pdv px -u_x \rho u_x \pdv{u_x}x -u_x \rho u_y \pdv{u_x}y \cdots\nonumber \\
  &\qquad\qquad\qquad-u_x\rho u_z\pdv{u_x}z + u_x\left(\pdv{\tau_{xx}}x + \pdv{\tau_{xy}}y + \pdv{\tau_{xz}}z\right) \nonumber \\
  &=-\frac12 u_x^2 \left(\pdv{\rho u_x}x + \pdv{\rho u_y}y + \pdv{\rho u_z}z\right)-u_x \pdv px -\frac12\rho u_x \pdv{u_x^2}x - \frac12\rho u_y \pdv{u_x^2}y \cdots\nonumber\\
  &\qquad\qquad\qquad- \frac12\rho u_z \pdv{u_x^2}z + u_x\left(\pdv{\tau_{xx}}x + \pdv{\tau_{xy}}y + \pdv{\tau_{xz}}z\right) \nonumber\\
  &= -\frac12\pdv{\rho u_x^3}x -\frac12\pdv{\rho u_y u_x^2}y - \frac12\pdv{\rho u_z u_x^2}z - u_x \pdv px + u_x\left(\pdv{\tau_{xx}}x + \pdv{\tau_{xy}}y + \pdv{\tau_{xz}}z\right). \label{eq: dKExdt}
\end{align}
We get similar expressions for the $\pdv{}t\frac12\rho u_y^2$ and $\pdv{}t\frac12\rho u_z^2$ terms:
\begin{align}
  \pdv{}t \frac12\rho u_y^2 &= -\frac12\pdv{\rho u_x u_y^2}x - \frac12\pdv{\rho u_y^3}y - \frac12\pdv{\rho u_z u_y^2}z- u_y \pdv py + u_y\left(\pdv{\tau_{xy}}x + \pdv{\tau_{yy}}y + \pdv{\tau_{yz}}z\right), \label{eq: dKEydt} \\
  \pdv{}t \frac12\rho u_z^2 &= -\frac12\pdv{\rho u_x u_z^2}x - \frac12\pdv{\rho u_yu_z^3}y - \frac12\pdv{\rho u_z^3}z - u_z \pdv pz + u_z\left(\pdv{\tau_{xz}}x + \pdv{\tau_{yz}}y + \pdv{\tau_{zz}}z\right). \label{eq: dKEzdt}
\end{align}
Subtracting \cref{eq: dKExdt,eq: dKEydt,eq: dKEzdt} from \cref{eq: drhoedt} we get
\begin{align}
  \pdv{\rho e}t - \frac12&\pdv{}t(\rho (u_x^2 + u_y^2 + u_z^2)) \nonumber\\
  &= -\frac\gamma{\gamma-1}\left(\pdv{u_xp}x + \pdv{u_yp}y + \pdv{u_zp}z\right) +u_x\pdv px +u_y \pdv py + u_z\pdv pz + \mathfrak{f}(\tau,j)\cdots\nonumber \\
  & \qquad\qquad -u_x\left(\pdv{\tau_{xx}}x + \pdv{\tau_{xy}}y + \pdv{\tau_{xz}}z\right) - u_y\left(\pdv{\tau_{xy}}x + \pdv{\tau_{yy}}y + \pdv{\tau_{yz}}z\right)\cdots\nonumber\\
  &\qquad\qquad- u_z\left(\pdv{\tau_{xz}}x + \pdv{\tau_{yz}}y + \pdv{\tau_{zz}}z\right) \nonumber\\
  &  = -\frac\gamma{\gamma-1} p\left(\pdv{u_x}x  + \pdv{u_y}y + \pdv{u_z}z\right) - \frac{1}{\gamma-1}\left(u_x\pdv px+ u_y\pdv py + u_z\pdv pz\right) \cdots\nonumber \\
  &\qquad\qquad + \mathfrak{f}(\tau,j)- u_x\left(\pdv{\tau_{xx}}x + \pdv{\tau_{xy}}y + \pdv{\tau_{xz}}z\right) \cdots\nonumber \\
  &\qquad\qquad- u_y\left(\pdv{\tau_{xy}}x + \pdv{\tau_{yy}}y + \pdv{\tau_{yz}}z\right) - u_z\left(\pdv{\tau_{xz}}x + \pdv{\tau_{yz}}y + \pdv{\tau_{zz}}z\right)\nonumber\\
  &=-\frac\gamma{\gamma-1} p (\nabla\cdot u) - \frac1{\gamma-1} u\cdot \nabla p + \mathfrak{f}(\tau, j) - u_x\left(\pdv{\tau_{xx}}x + \pdv{\tau_{xy}}y + \pdv{\tau_{xz}}z\right)\cdots\nonumber\\
  &\qquad\qquad - u_y\left(\pdv{\tau_{xy}}x + \pdv{\tau_{yy}}y + \pdv{\tau_{yz}}z\right) - u_z\left(\pdv{\tau_{xz}}x + \pdv{\tau_{yz}}y + \pdv{\tau_{zz}}z\right).
\end{align}
This gives the following expression for $\pdv pt$:
\begin{align}
  \begin{split}
    \pdv pt &= - \gamma p(\nabla\cdot u) - (u\cdot\nabla p) - u_x\left(\pdv{\tau_{xx}}x + \pdv{\tau_{xy}}y + \pdv{\tau_{xz}}z\right)\cdots\\
    &\qquad - u_y\left(\pdv{\tau_{xy}}x + \pdv{\tau_{yy}}y + \pdv{\tau_{yz}}z\right) - u_z\left(\pdv{\tau_{xz}}x + \pdv{\tau_{yz}}y + \pdv{\tau_{zz}}z\right)\cdots\\ 
    & \qquad+ \pdv{}x(u_x\tau_{xx} + u_y\tau_{xy} + u_z\tau_{xz}) + \pdv{}y(u_x\tau_{yx} + u_y\tau_{yy} + u_z\tau_{yz}) \cdots\\
    &\qquad+ \pdv{}z(u_x\tau_{zx} + u_y\tau_{zy} + u_z\tau_{zz}) + \pdv{q_x}x + \pdv{q_y}y + \pdv{q_z}z.
  \end{split}
  \label{eq: dpdt}
\end{align}
Most of the terms of \cref{eq: dpdt} are quadratic in $p$ and the velocity components. The exception are the diffusive heat flux terms $\pdv{q_x}x$, $\pdv{q_y}y$, and $\pdv{q_z}z$. Under the constant $c_p$ assumption, these terms are simplified from their original definition:
\begin{align}
  \begin{split}
    q_x  &= -\frac\kappa R \pdv {p\zeta}x + \frac\kappa{c_p} \sum_{l=1}^{n_\text{sp}}\pdv{Y_l}x, \\
    q_y  &= -\frac\kappa R \pdv {p\zeta}y + \frac\kappa{c_p} \sum_{l=1}^{n_\text{sp}}\pdv{Y_l}y, \\
    q_z  &= -\frac\kappa R \pdv {p\zeta}z + \frac\kappa{c_p} \sum_{l=1}^{n_\text{sp}}\pdv{Y_l}z
  \end{split}
\end{align}
which makes \cref{eq: dpdt} quadratic in $p$, $\zeta$, $u_x$, $u_y$, $u_z$, and the species variables. Since the species variables are not directly modeled as part of the lifted state, these heat flux terms are not quadratic in the lifted state.

\subsection{Derivation of $\rho Z_m$ equation}
The evolution of the variable $\rho Z_m$ is given by the conservative representation as
\begin{align}
  \pdv{\rho Z_m}t &=  -\pdv{\rho u_x Z_m}x -\pdv{\rho u_y Z_m}y - \pdv{\rho u_z Z_m}z + \pdv{}x\left(\rho D \pdv{Z_m}x\right)\cdots\nonumber \\
  & \qquad\qquad\qquad\qquad\qquad\qquad + \pdv{}y\left(\rho D \pdv{Z_m}y\right) + \pdv{}y\left(\rho D \pdv{Z_m}y\right).
  \label{eq: rho Z conser}
\end{align}
Note that $\rho D =\rho \alpha= \frac{\kappa}{c_p}$ (under our unit Lewis number assumption), so under the constant $c_p$ assumption, \cref{eq: rho Z conser} becomes
\begin{align}
  \pdv{\rho Z_m}t &=  -\pdv{u_x (\rho  Z_m)}x -\pdv{ u_y (\rho Z_m)}y - \pdv{u_z(\rho  Z_m)}z + \frac{\kappa}{c_p}\left(\pdv[2]{Z_m}x + \pdv[2]{Z_m}y + \pdv[2]{Z_m}z\right).
  \label{eq: dZ eqn final}
\end{align}
The first three terms are quadratic in the velocity components and the variable $\rho Z_m$. For the last three terms, note that the product rule gives $\pdv {\rho Z_m}x = \rho \pdv{Z_m}x + Z_m\pdv\rho x$, which gives the following identity for the $x$-derivative of $Z_m$ (and similar identities exist for $y$ and $z$):
\begin{align}
  \pdv {Z_m}x = \zeta \pdv{\rho Z_m}x - Z_m \zeta \pdv \rho x = \zeta \pdv{\rho Z_m}x + \rho Z_m \pdv{\zeta}x.
  \label{eq: dZdz identity}
\end{align}
Since the first spatial derivatives of $Z_m$ are quadratic in the variable $\rho Z_m$ and the specific volume $\zeta$, the second spatial derivatives must also be quadratic in these variables, since the derivative operator is linear.

\subsection{Derivation of $\rho C$ equation}
The derivation of the $\rho C$-evolution equation under the assumption of constant $c_p$ is analogous to that of the $\rho Z_m$-equation. The evolution equation in the conservative representation is given by
\begin{align}
  \pdv{\rho C}t = -\pdv{\rho u_x C}x - &\pdv{\rho u_y C}y - \pdv{\rho u_z C}z + \pdv{}x\left(\rho D \pdv Cx\right) \cdots\nonumber\\
  & \qquad\qquad + \pdv{}y\left(\rho D\pdv Cy\right) + \pdv{}z\left(\rho D\pdv Cz\right) + \dot\omega_C.
\end{align}
As in the derivation of \cref{eq: dZ eqn final}, we group the variable $\rho C$ together in the first three terms and we use the fact that $\rho D = \frac{\kappa}{c_p}$ is a constant to arrive at
\begin{align}
  \pdv{\rho C}t = -\pdv{u_x(\rho C)}x - &\pdv{u_y (\rho C)}y - \pdv{u_z(\rho  C)}z + \frac\kappa{c_p}\left(\pdv[2]Cx+ \pdv[2]Cy + \pdv[2]Cz\right) +\dot\omega_C. \label{eq: final dCdt}
\end{align}
We have a similar expression for the spatial derivatives of $C$ as we did for those of $Z_m$ in~\cref{eq: dZdz identity}:
\begin{align}
  \pdv Cx = \zeta \pdv Cx - C \zeta \pdv \rho x = \zeta \pdv{\rho C}x + \rho C \pdv{\zeta}x,
\end{align}
which is quadratic in $\zeta$ and $\rho C$. Thus, \cref{eq: final dCdt} is almost fully quadratic in the velocity components, the variable $\rho C$, and $\zeta$, with the sole non-quadratic term being the reaction source term $\dot\omega_C$.

\bibliographystyle{plain}
\bibliography{references}
\end{document}